\newcommand{\doi}[1]{\href{https://doi.org/#1}{\url{doi:#1}}}
\newcommand{\myurl}[1]{\href{#1}{\url{#1}}}
\newtheorem{thm}{Theorem}[section]
\newtheorem{cor}[thm]{Corollary}
\newtheorem{lem}[thm]{Lemma}
\newtheorem{prop}[thm]{Proposition}
\newtheorem{problem}[thm]{Problem}
\theoremstyle{definition}
\newtheorem{defn}[thm]{Definition}
\newtheorem{example}[thm]{Example}
\newtheorem{rem}[thm]{Remark}
\numberwithin{equation}{section}
\newcommand{\bC}{\mathbb{C}}
\newcommand{\bD}{\mathbb{D}}
\newcommand{\bR}{\mathbb{R}}
\newcommand{\bN}{\mathbb{N}}
\newcommand{\bH}{\mathbb{H}}
\newcommand{\cH}{\mathcal{H}}
\newcommand{\cP}{\mathcal{P}}
\newcommand{\cQ}{\mathcal{Q}}
\newcommand{\al}{\alpha}
\newcommand{\Ga}{\Gamma}
\newcommand{\ga}{\gamma}
\newcommand{\de}{\delta}
\newcommand{\eps}{\varepsilon}
\newcommand{\la}{\lambda}
\renewcommand{\phi}{\varphi}
\newcommand{\enumber}{\operatorname{e}}
\newcommand{\iu}{\operatorname{i}}
\renewcommand{\Re}{\operatorname{Re}}
\renewcommand{\Im}{\operatorname{Im}}
\newcommand{\dif}{\mathrm{d}}
\newcommand{\AC}{\operatorname{AC}}
\newcommand{\sinc}{\operatorname{sinc}}
\newcommand{\dsqrt}{d_{\operatorname{sqrt}}}
\newcommand{\conj}[1]{\overline{#1}}
\newcommand{\topequiv}{\stackrel{\operatorname{top}}{\sim}}
\newcommand{\lipequiv}{\stackrel{\operatorname{Lip}}{\sim}}
\newcommand{\uniequiv}{\stackrel{\operatorname{u}}{\sim}}
\newcommand{\intrequiv}{\stackrel{\operatorname{intr}}{\sim}}
\newcommand{\infsim}{\stackrel{\operatorname{infin}}{\sim}}
\newcommand{\strsim}{\stackrel{\operatorname{str}}{\sim}}
\newcommand{\dil}{\operatorname{dil}}
\newcommand{\id}{\operatorname{id}}
\newcommand{\eqdef}{\coloneqq}
\definecolor{darkgreen}{rgb}{0, 0.5, 0}
\definecolor{lightgreen}{rgb}{0.7, 1, 0.7}
\definecolor{lightblue}{rgb}{0.7, 0.7, 1}
\tikzset{mynode/.style
={rounded corners,
text centered,
text width=10ex,
minimum size = 4ex,
draw = darkgreen,
fill = lightgreen,
outer sep = 0.4ex}}
\tikzset{myedge/.style
={black, very thick, -Stealth}}
\tikzset{badedge/.style
={red, very thick, -Stealth}}
\newcommand*{\StrikeThruDistance}{0.12cm}%
\newcommand*{\StrikeOne}{\StrikeThruDistance,\StrikeThruDistance}%
\newcommand*{\StrikeTwo}{\StrikeThruDistance,
-\StrikeThruDistance}
\title{Locally similar distances\\
and equality of the induced intrinsic distances}
\author{Erick Lee-Guzm\'{a}n,
Egor A. Maximenko,\\
Enrique Abdeel Mu\~{n}oz-de-la-Colina,
Marco Iv\'{a}n Ruiz-Carmona}
\begin{document}
\maketitle

\begin{abstract}
Let $X$ be a set and $d_1,d_2$ be two distances on $X$.
We say that $d_1$ and $d_2$ are locally similar and write $d_1\cong d_2$
if $d_1$ and $d_2$ are topologically equivalent and, for every $a$ in $X$,
\[
\lim_{x\to a}
\frac{d_2(x,a)}{d_1(x,a)}=1.
\]
We prove that if $d_1\cong d_2$,
then the intrinsic distances induced by $d_1$ and $d_2$ coincide.
We also provide sufficient conditions for $d_1\cong d_2$ and consider several examples related to reproducing kernel Hilbert spaces.

\medskip\noindent
\textbf{Keywords:}
intrinsic distance,
equivalent distances,
locally Lipschitz functions,
distortion,
local dilatation,
reproducing kernel Hilbert spaces.

\medskip\noindent
\textbf{Mathematics Subject Classification (2020):}
51K05, 30F45, 46E22.
\end{abstract}

\medskip
\tableofcontents

\medskip
\section{Introduction}

\subsection*{Background}

The concept of distance is simple, but it plays a fundamental role in mathematics. 
See \cite{gromov_riemannian, burago_metric, kunzinger_alexandrov,
deza_encyclopedia}
for many examples and applications of distances.
There are many different notions of ``equivalent distances''
on a fixed set $X$,
including the following standard notions:
\begin{itemize}
\item $d_1$ and $d_2$ are \emph{topologically equivalent}
($d_1\topequiv d_2$),
if they induce the same topology;
\item $d_1$ and $d_2$ are
\emph{uniformly equivalent}
($d_1\uniequiv d_2$),
if they induce the same uniform structure,
i.e., the identity function $(X,d_1)\to(X,d_2)$
and its inverse are uniformly continuous;
\item $d_1$ and $d_2$ are
\emph{biLipschitz equivalent}
($d_1\lipequiv d_2$),
if the identity function $(X,d_1)\to(X,d_2)$ and its inverse are Lipschitz continuous.
\end{itemize}

Every distance $d$ induces the
\emph{intrinsic distance} $d^\ast$,
see~\cite[Chapter 2]{burago_metric} or~\cite[Chapter 1]{kunzinger_alexandrov}. 
Obviously, the binary relation $d_1\intrequiv d_2$
(``intrinsic equivalence''),
which we define by $d_1^\ast = d_2^\ast$, is an equivalence relation on the set of all distances defined on a fixed domain.
A natural problem is to obtain criteria for
$d_1^\ast = d_2^\ast$.
It is easy to see that $d_1\lipequiv d_2$ implies
$d_1^\ast\lipequiv d_2^\ast$
(Proposition~\ref{prop:lipequiv_implies_lipequiv_for_intrinsic_distances}),
but it does not imply
$d_1^\ast=d_2^\ast$
(Example~\ref{example:pseudologarithmic_distance_on_segment}).

\subsection*{Main results}

In this paper, we introduce the concept of \emph{locally similar} distances
(Definition~\ref{def:locally_similar_distances})
and denote it by $d_1\cong d_2$.
This concept is related to some concepts
introduced by Capogna
and Le~Donne~\cite{Capogna_LeDonne_2021};
see Remark~\ref{rem:connection_to_Capogna_LeDonne}.

Our main result (Theorem~\ref{thm:main})
states that the relation $d_1\cong d_2$ is sufficient for $d_1^\ast=d_2^\ast$.
We show that the relation $\cong$ is useful to compute $d^\ast$ in various examples.
Nevertheless,
$d_1\cong d_2$ is not necessary for $d_1^\ast=d_2^\ast$;
see Examples~\ref{example:sqrt}
and~\ref{example:comb}.

By definition, locally similar distances are topologically equivalent.
We show that $\cong$ is independent of $\lipequiv$, and $\uniequiv$:
the biLipschitz equivalence does not imply the local similarity
(Example~\ref{example:biLipschitz_equiv_does_not_imply_loc_sim}),
and the local similarity does not imply even the uniform equivalence
(Example~\ref{example:open_interval_with_circular_distance}).
Figure~\ref{fig:logic_relations} shows
the logical relationships among
the equivalencies of distances considered in the paper.

\begin{figure}[htb]
\centering
\begin{tikzpicture}
\node [mynode] (strsim)
  at (0, 5)
  {$\strsim$};
\node [mynode] (infinsim)
  at (0, 2.5)
  {$\infsim$};
\node [mynode] (loc)
  at (0, 0)
  {$\cong$};
\node [mynode] (intr)
  at (0, -2.5)
  {$\intrequiv$};
\node [mynode] (lip)
  at (-5, 2.5)
  {$\lipequiv$};
\node [mynode] (uni)
  at (-5, 0)
  {$\uniequiv$};
\node [mynode] (top)
  at (-5, -2.5)
  {$\topequiv$};
\draw [myedge]
  (loc) -- (intr)
  node [black, midway, left]
  {Thm.~\ref{thm:main}};
\draw [badedge]
  (intr) to [out=45, in=-45] (loc);
\coordinate (MidWaySeven)
  at (0.77, -1.25);
\draw [very thick, magenta, -]
  ($(MidWaySeven)-(\StrikeTwo)$)
  -- 
  ($(MidWaySeven)+(\StrikeTwo)$);
\node at (1, -1.25) [right]
  {Ex.~\ref{example:comb}};
\draw [myedge]
  (infinsim) -- (loc)
  node [black, midway, left]
  {Pr.~\ref{prop:infsim_implies_locsim}};
\draw [badedge]
  (loc) to [out=45, in=-45] (infinsim);
\coordinate (MidWayFive)
  at (0.77, 1.25);
\draw [very thick, magenta, -]
  ($(MidWayFive)-(\StrikeTwo)$)
  -- 
  ($(MidWayFive)+(\StrikeTwo)$);
\node at (1, 1.25) [right]
  {Ex.~\ref{example:hook}};
\draw [myedge]
  (strsim) -- (infinsim)
  node [black, midway, left]
  {Pr.~\ref{prop:strsim_implies_infsim}};
\draw [badedge]
  (infinsim) to [out=45, in=-45] (strsim);
\coordinate (MidWaySix)
  at (0.77, 3.75);
\draw [very thick, magenta, -]
  ($(MidWaySix)-(\StrikeTwo)$)
  -- 
  ($(MidWaySix)+(\StrikeTwo)$);
\node at (1, 3.75) [right]
  {Ex.~\ref{example:OmerCantor}};
\draw [badedge]
  (strsim.south west) to (uni);
\coordinate (MidWayEight)
  at (-2.65, 2.7);
\draw [very thick, magenta, -]
  ($(MidWayEight)-(\StrikeTwo)$)
  -- 
  ($(MidWayEight)+(\StrikeTwo)$);
\node at ($(MidWayEight)+(-0.25cm,0.25cm)$)
  [rotate=48]
  {Rem.~\ref{rem:strlocsim_not_unisim}};
\draw [myedge]
  (lip) -- (uni);
\draw [myedge]
  (uni) -- (top);
\draw [myedge]
  (loc) -- (top)
  node [black, midway, above, sloped]
  {(by def.)};
\draw [badedge]
  (lip) -- (loc)
  node [black, midway, above, sloped] {Ex.~\ref{example:biLipschitz_equiv_does_not_imply_loc_sim}};
\draw [badedge]
  (loc) -- (uni)
  node [black, midway, above, sloped] {Ex.~\ref{example:open_interval_with_circular_distance}};
\coordinate (MidWayOne)
  at ($(lip.south east)!0.5!(loc.north west)$);
    \draw [very thick, magenta, -]
    ($(MidWayOne)-(\StrikeOne)$) -- 
    ($(MidWayOne)+(\StrikeOne)$);
\coordinate (MidWayTwo)
  at ($(uni.north east)!0.5!(loc.south west)$);
\draw [very thick, magenta, -]
  ($(MidWayTwo)-(\StrikeTwo)$)
  -- 
  ($(MidWayTwo)+(\StrikeTwo)$);
\draw [badedge]
  (intr) -- (top)
  node [black, midway, below, sloped] {Ex.~\ref{example:quadrant_angular}};
\coordinate (MidWayThree)
  at ($(top.north east)!0.5!(intr.south west)$);
\draw [very thick, magenta, -]
  ($(MidWayThree)-(\StrikeTwo)$)
  -- 
  ($(MidWayThree)+(\StrikeTwo)$);
\draw [badedge]
  (lip)
  to [out=-135, in=135]
  (-6.5, -3.5)
  to[out=-45, in=-135]
  (intr);
\coordinate (MidWayFour)
  at (-6.5, -3.5);
\draw [very thick, magenta, -]
  ($(MidWayFour)-(\StrikeOne)$)
  -- 
  ($(MidWayFour)+(\StrikeOne)$);
\node at (-6.8, -3.8)
 [rotate=-45]
{Ex.~\ref{example:pseudologarithmic_distance_on_segment}};
\end{tikzpicture}
\caption{Some logic relations between different equivalencies of distances.
We give references to the corresponding theorems, propositions, remarks,
and examples (counter-examples).
\label{fig:logic_relations}
}
\end{figure}

Several authors have studied the situation when $d$ is an intrinsic distance,
i.e., $d^\ast=d$
(see 
\cite[Section~1.3]{kunzinger_alexandrov},
\cite[Chapter~1, parts A, B]{gromov_riemannian},
\cite[Section~2.3]{burago_metric}).
The constructions of this paper are useful in a more general case,
namely when $d^\ast\cong d$.

We hope to relate this work to Riemannian metrics in future research.
In this paper, we avoid Riemannian metrics
to show the power of more elementary tools.

\subsection*{Structure of the paper}

In Section~\ref{sec:locally_similar_distances},
we introduce the concept of locally similar distances and show that it is not comparable with $\lipequiv$ or $\uniequiv$.
In Section~\ref{sec:strongly_similar_distances},
we study two other concepts
(\emph{strong local similarity} $\strsim$
and \emph{infinitesimal similarity} $\infsim$)
that are slightly stronger
than local similarity.
In Section~\ref{sec:locally_similar_distances_and_composition}, we show that composing a distance $d_1$ with a function that satisfies certain properties yields a distance $d_2$ that is locally similar to $d_1$
($d_2\cong d_1$, if $d_2=f\circ d_1$).
In Section~\ref{sec:intrinsic_distance}, we recall the definition of the intrinsic distance associated with a given distance;
we also provide examples which are used throughout the paper.
Section~\ref{sec:main_result} contains a proof of the main result.
Section~\ref{sec:examples_RK} contains several examples related to the distances induced by kernels (or, equivalently, by reproducing kernel Hilbert spaces).

\medskip 
\section{Locally similar distances}
\label{sec:locally_similar_distances}

In this section, we introduce the main concept of this paper (Definition~\ref{def:locally_similar_distances})
and study its elementary properties.
We have invented this concept
as a sufficient condition
for the intrinsic equivalence,
see Theorem~\ref{thm:main}.

\begin{defn}
\label{def:locally_similar_distances}
Let $X$ be a set and
$d_1,d_2\colon X\to[0,+\infty)$
be distances on $X$.
We say that $d_1$ and $d_2$ are
\emph{locally similar}
and write $d_1\cong d_2$
if $d_1$ and $d_2$ are topologically equivalent and for every $a$ in $X$,
\begin{equation}
\label{eq:def_loc_sim}
\lim_{x\to a}
\frac{d_2(x,a)}{d_1(x,a)}=1.
\end{equation}
\end{defn}

In the context of Definition~\ref{def:locally_similar_distances},
let $\tau$ be the topology induced by $d_1$ (or $d_2$), and let $\tau(a)$ be the set of all open neighborhoods of $a$.
Then,~\eqref{eq:def_loc_sim} can be written in the following more explicit form:
\begin{equation}
\label{eq:def_loc_sim_explicit}
\forall \eps>0\qquad
\exists U\in\tau(a)\qquad
\forall x\in U\setminus\{a\}\qquad
\left|\frac{d_2(x,a)}{d_1(x,a)}-1\right|<\eps.
\end{equation}

\begin{prop}
Let $X$ be a set.
The binary relation $\cong$,
defined on the set of all distances on $X$,
is an equivalence relation.
\end{prop}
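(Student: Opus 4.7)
The plan is to verify the three defining properties of an equivalence relation: reflexivity, symmetry, and transitivity. Throughout, I will use both parts of Definition~\ref{def:locally_similar_distances}, namely topological equivalence together with the pointwise limit condition~\eqref{eq:def_loc_sim}. Since topological equivalence is already known to be an equivalence relation on the set of distances on $X$, for each property I only need to check that the limit condition~\eqref{eq:def_loc_sim} behaves correctly.

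For reflexivity, given any distance $d$ on $X$, the quotient $d(x,a)/d(x,a)$ equals $1$ for every $x\ne a$, so the limit in~\eqref{eq:def_loc_sim} is trivially $1$ at every $a$. For symmetry, suppose $d_1\cong d_2$. Fix $a$ in $X$. Since $d_1(x,a),d_2(x,a)>0$ for $x\ne a$ and the limit $\lim_{x\to a} d_2(x,a)/d_1(x,a)=1$ is a nonzero real number, the reciprocal $d_1(x,a)/d_2(x,a)$ also has limit $1$ as $x\to a$ (using either the basic rule for limits of reciprocals, or rewriting $|d_1/d_2-1|=|d_2-d_1|/d_2$ and using~\eqref{eq:def_loc_sim_explicit} to bound $|d_2-d_1|$ by $\eps\, d_1$ and then compare $d_1$ with $d_2$).

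For transitivity, assume $d_1\cong d_2$ and $d_2\cong d_3$. Fix $a$ in $X$. For $x$ in a sufficiently small neighborhood of $a$ with $x\ne a$, the identity
\[
\frac{d_3(x,a)}{d_1(x,a)}
=
\frac{d_3(x,a)}{d_2(x,a)}\cdot\frac{d_2(x,a)}{d_1(x,a)}
\]
holds, and the right-hand side is a product of two quotients each tending to $1$ as $x\to a$. Hence the limit of the left-hand side is $1$, giving $d_1\cong d_3$ once we combine this with the transitivity of topological equivalence.

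I do not expect any genuine obstacle here; the only subtlety is that the quotients in~\eqref{eq:def_loc_sim} are a priori undefined at $a$, but this is handled by the standard convention that the limit is taken over $x\ne a$, as made explicit in~\eqref{eq:def_loc_sim_explicit}.
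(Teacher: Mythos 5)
Your proof is correct and follows essentially the same route as the paper, which simply states that the result ``follows easily from the arithmetic properties of limits''; you have merely spelled out the reflexivity, symmetry, and transitivity checks that the paper leaves implicit.
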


\begin{proof}
It follows easily
from the arithmetic properties of limits.
\end{proof}

Given a metric space $(X,d)$,
for every $a$ in $X$
and every $\de>0$
we denote by $B_d(a,\de)$ the open ball
\[
B_d(a,\de)\eqdef
\bigl\{x\in X\colon\ d(x,a)<\de\bigr\}.
\]
Furthermore, we denote by $\tau_d$ the topology induced by $d$.

\hyphenation{to-po-lo-gy}
In the next proposition,
we characterize the relation $d_1\cong d_2$ 
avoiding the induced topology.

\begin{prop}
\label{prop:local_similarity_another_form}
Let $X$ be a set and
$d_1,d_2\colon X\to[0,+\infty)$
be distances on $X$.
$d_1\cong d_2$ if and only if for every $a$ in $X$, the following limit relations hold:
\begin{align}
\label{eq:loc_sim_first_limit}
\lim_{\substack{d_1(x,a)\to 0\\x\ne a}}
\frac{d_2(x,a)}{d_1(x,a)}=1,
\\
\label{eq:loc_sim_second_limit}
\lim_{\substack{d_2(x,a)\to 0\\x\ne a}}
\frac{d_1(x,a)}{d_2(x,a)}=1.
\end{align}
\end{prop}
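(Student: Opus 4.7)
The plan is to prove the biconditional by establishing each implication separately. The guiding idea in both directions is that, for a fixed $a$, the convergence ``$x\to a$ in the common topology'' can be replaced by either ``$d_1(x,a)\to 0$, $x\ne a$'' or ``$d_2(x,a)\to 0$, $x\ne a$'' once one knows that $d_1$ and $d_2$ induce the same topology. The alternative characterization essentially trades topological equivalence for a second, symmetric, limit condition.

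For the direct implication, assume $d_1\cong d_2$. Since $d_1\topequiv d_2$, the three convergence modes $x\to a$, $d_1(x,a)\to 0$, and $d_2(x,a)\to 0$ coincide (by definition of the neighbourhood bases given by $B_{d_1}(a,\cdot)$ and $B_{d_2}(a,\cdot)$), so \eqref{eq:loc_sim_first_limit} is merely \eqref{eq:def_loc_sim} rewritten. For \eqref{eq:loc_sim_second_limit}, I would note that the ratio $d_1(x,a)/d_2(x,a)$ is well-defined for $x\ne a$ and is the reciprocal of $d_2(x,a)/d_1(x,a)$, and invoke the continuity of $t\mapsto 1/t$ at $t=1$ together with the same equivalence of convergence modes.

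For the converse, the substantive step is to recover topological equivalence from \eqref{eq:loc_sim_first_limit} and \eqref{eq:loc_sim_second_limit}. Applying \eqref{eq:loc_sim_first_limit} with $\eps=1$ yields, for each $a$, a number $\delta_1>0$ such that $0<d_1(x,a)<\delta_1$ implies $d_2(x,a)<2\,d_1(x,a)$. Given any $\eta>0$, setting $\delta\eqdef\min\{\delta_1,\eta/2\}$ produces the inclusion $B_{d_1}(a,\delta)\subseteq B_{d_2}(a,\eta)$, and a symmetric argument using \eqref{eq:loc_sim_second_limit} gives the reverse inclusion. Hence the open balls of $d_1$ and $d_2$ form a common neighbourhood base at every point, so $\tau_{d_1}=\tau_{d_2}$. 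Once topological equivalence is in hand, \eqref{eq:def_loc_sim} is an immediate restatement of \eqref{eq:loc_sim_first_limit}, since convergence $x\to a$ in the common topology is the same as $d_1(x,a)\to 0$.

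No step presents a serious obstacle; the proof is a bookkeeping exercise in the $\eps$-$\delta$ definition of a limit and in the comparison of neighbourhood bases at a point. The only mild care required is to track that all ratios are taken with $x\ne a$, which is already explicit in \eqref{eq:loc_sim_first_limit} and \eqref{eq:loc_sim_second_limit}.
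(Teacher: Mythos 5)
Your proposal is correct and follows essentially the same route as the paper: for necessity you pass from neighbourhoods to balls via topological equivalence and obtain the reciprocal limit from the arithmetic of limits, and for sufficiency you apply the first limit with a fixed $\eps$ (you use $\eps=1$, the paper uses $\eps=1/2$, a cosmetic difference) to produce the ball inclusion $B_{d_1}(a,\delta)\subseteq B_{d_2}(a,\eta)$, then argue symmetrically to conclude $\tau_{d_1}=\tau_{d_2}$ before reading off \eqref{eq:def_loc_sim}. There is no gap; the bookkeeping around $x\neq a$ is handled as in the paper.
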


We understand~\eqref{eq:loc_sim_first_limit} and~\eqref{eq:loc_sim_second_limit}
in the following sense, respectively:
\begin{align}
\label{eq:loc_sim_first_limit_explicit}
\forall\eps>0\qquad
\exists\de>0\qquad
\forall x\in B_{d_1}(a,\de)\setminus\{a\}
\qquad
\left|\frac{d_2(x,a)}{d_1(x,a)}-1\right|<\eps,
\\[1ex]
\label{eq:loc_sim_second_limit_explicit}
\forall\eps>0\qquad
\exists\de>0\qquad
\forall x\in B_{d_2}(a,\de)\setminus\{a\}
\qquad
\left|\frac{d_1(x,a)}{d_2(x,a)}-1\right|<\eps.
\end{align}

\begin{proof}
Necessity.
Suppose that $d_1\cong d_2$, i.e.,
\eqref{eq:def_loc_sim} holds for each $a$.
Since $\tau_{d_1}$ is induced by $d_1$,
there exists $\de>0$ such that
$B_{d_1}(a,\de)\subseteq U$.
Thereby, we get~\eqref{eq:loc_sim_first_limit_explicit}.
By the arithmetic properties of limits,
\eqref{eq:def_loc_sim}
implies a similar limit with the quotient $d_1/d_2$,
which yields~\eqref{eq:loc_sim_second_limit_explicit}.

Sufficiency.
Suppose
that~\eqref{eq:loc_sim_first_limit_explicit}
and~\eqref{eq:loc_sim_second_limit_explicit} are fulfilled for every $a$ in $X$.
Let $a\in X$.
Applying~\eqref{eq:loc_sim_first_limit_explicit} with $\eps=1/2$ we get $\de>0$ such that for all $x$ in $B_{d_1}(a, \de)$,
\[
d_2(x,a)\le \frac{3}{2}d_1(x,a),
\]
which implies that
$B_{d_1}(a,r)\subseteq B_{d_2}(a,3r/2)$ for every $r$ with $0<r<\de$.
Since $a$ is arbitrary,
we get $\tau_{d_2}\subseteq\tau_{d_1}$.
In a similar way, using~\eqref{eq:loc_sim_second_limit},
we obtain that $\tau_{d_1}\subseteq\tau_{d_2}$.
So, $d_1$ and $d_2$ induce the same topology.
Finally, taking $U\eqdef B_{d_1}(a,\de)$,
we see that~\eqref{eq:loc_sim_first_limit_explicit} implies~\eqref{eq:def_loc_sim_explicit}.
\end{proof}

\begin{rem}
\label{rem:generalization_to_pseudodistances}
This paper could be easily generalized to pseudodistances.
Instead of $x\ne a$,
we should require $d_1(x,a)>0$
in~\eqref{eq:def_loc_sim},
\eqref{eq:def_loc_sim_explicit},
\eqref{eq:loc_sim_first_limit},
and~\eqref{eq:loc_sim_first_limit_explicit},
and $d_2(x,a)>0$ in~\eqref{eq:loc_sim_second_limit}
and~\eqref{eq:loc_sim_second_limit_explicit}.
Of course, the topological equivalence of $d_1$ and $d_2$ implies that $d_1(x,a)>0$ if and only if $d_2(x,a)>0$.
\end{rem}

\begin{rem}
\label{rem:connection_to_Capogna_LeDonne}
The binary relation $\cong$
can be described in terms of some concepts
from
Capogna and Le~Donne~\cite[Subsection~2.1]{Capogna_LeDonne_2021}.
Given two metric spaces $(X,d_1)$ and $(Y,d_2)$,
a homeomorphism $f\colon X\to Y$,
and a point $a$ in $X$,
they consider the following superior and inferior limits:
\begin{equation}
\label{eq:Lf_and_lf}
L_f(a)\eqdef
\limsup_{x\to a}
\frac{d_2(f(x),f(a))}{d_1(x,a)},
\qquad
\ell_f(a)\eqdef
\liminf_{x\to a}
\frac{d_2(f(x),f(a))}{d_1(x,a)}.
\end{equation}
We apply these concepts in the particular case where $Y=X$ and $f$ is the identity function
$\id_X\colon X\to X$
where the domain is equipped with $d_1$
and the codomain is equipped with $d_2$.
It is well known that the existence of $\lim$ is equivalent to the equality between
$\limsup$ and $\liminf$;
see, e.g.,
\cite[Chapter~IV, Subsection~5.6]{Bourbaki1995top}.
Thus,
$d_1\cong d_2$ if and only if
for every $a$ in $X$,
$L_{\id_X}(a)=\ell_{\id_X}(a)=1$.
In the terminology from~\cite{Capogna_LeDonne_2021},
this means that $\id_X$ has
pointwise Lipschitz constant $1$
and distortion $1$ at every point.
\end{rem}

\begin{example}
\label{example:biLipschitz_equiv_does_not_imply_loc_sim}
There exist two distances that are biLipschitz equivalent,
but not locally similar.
Consider $X = [0, 1]$. Define $\rho\colon X^2\to [0,+\infty)$, $\rho(x, y) \eqdef |f(x)-f(y)|$, where
\[
f(x) \eqdef x(1+x).
\]
More explicitly, 
\[
\rho(x, y) = |x-y|(1+x+y).
\]
Let $d_{[0,1]}$ be the Euclidean distance on $[0,1]$.
Obviously, $\rho$ is biLipschitz equivalent to $d_{[0,1]}$:
\[
|x-y|
\leq 
\rho(x, y)
\leq 
3|x-y|.
\]
We observe that the limit from Definition~\ref{def:locally_similar_distances} exists for each $a$ in $X$, but it depends on $a$:
\[
\lim_{x\to a}
\frac{\rho(x,a)}{d_{[0,1]}(x,a)}
= 
\lim_{x\to a} (1+x+a)
=
1+2a.
\]
Therefore, $\rho$ is not locally similar to $d_{[0,1]}$.
Moreover, there is no $C>0$ such that
$\rho \cong C d_{[0,1]}$.
\hfill\qedsymbol
\end{example}

\begin{example}
\label{example:loc_sim_does_not_imply_biLipschitz_equiv}
There exist two distances that are locally similar,
but not biLipschitz equivalent.  
Consider $(\bR, d_\bR)$ and $(\bR, \rho)$, where 
    \[
    \rho(x, y) \eqdef \min\{1, |x-y|\}.
    \]
    Then, $d_\bR\cong \rho$ but there is no $C>0$ such that $d_\bR(x, y)\leq C \rho(x, y)$ for every $x,y$ in $\bR$. 
    \hfill\qedsymbol
\end{example}

\begin{example}
\label{example:open_interval_with_circular_distance}
There exist two distances that are locally similar, but not uniformly equivalent.
We consider the open
interval $X=(0,2\pi)$
provided with the Euclidean distance
$d_X(a,b) \eqdef |a-b|$
and the ``circular distance''
\[
\rho(a,b) \eqdef
\bigl|\enumber^{\iu a}-\enumber^{\iu b}\bigr|,
\qquad\text{i.e.},\qquad
\rho(a,b)=2\left|\sin\frac{a-b}{2}\right|,
\]
see Figure~\ref{fig:circular_distance}.
It is easy to see that $d_X\cong \rho$.
Nevertheless, for $x_m=\frac{1}{m}$ and $y_m=2\pi-\frac{1}{m}$,
\[
\lim_{m\to\infty}\rho(x_m,y_m)=0,\qquad
\lim_{m\to\infty}d_X(x_m,y_m)=2\pi.
\]
Therefore, the identity function $(X,\rho)\to (X,d_X)$ is not uniformly continuous.
Hence, $d_X$ and $\rho$ are not uniformly equivalent.
\hfill\qedsymbol
\end{example}

\begin{figure}[htb]
\centering
\begin{tikzpicture}
\filldraw [lightgreen]
  (0.5, -0.1) rectangle (5.2832, 0.1);
\draw (0, 0) -- (2*3.1416, 0);
\filldraw (0.5, 0) circle (0.05cm);
\node at (0.5, 0) [below] {$a$};
\filldraw (5.2832, 0)
  circle (0.05cm);
\node at (5.2832, 0) [below] {$b$};
\filldraw [black, fill=white]
  (0, 0) circle (0.05cm);
\filldraw [black, fill=white]
  (2*3.1416, 0) circle (0.05cm);
\node at (0, 0) [above] {$0$};
\node at (2*3.1416, 0) [above] {$2\pi$};
\begin{scope}[xshift=10cm]
\draw (0, 0) circle (1cm);
\filldraw [draw=black, fill=white]
  (1, 0) circle (0.05cm);
\draw [thick, blue]
  (0.8776, 0.4794)
  -- (0.5403, -0.8415);
\filldraw (0.8776, 0.4794)
  circle (0.05cm);
\node at (0.8776, 0.4794) [above right]
  {$\enumber^{\iu a}$};
\filldraw (0.5403, -0.8415)
  circle (0.05cm);
\node at (0.5403, -0.8415) [below right]
  {$\enumber^{\iu b}$};
\end{scope}
\end{tikzpicture}
\caption{Distances from Example~\ref{example:open_interval_with_circular_distance}.
\label{fig:circular_distance}}
\end{figure}

\section{Strongly locally similar
and infinitesimally similar distances}
\label{sec:strongly_similar_distances}

In this section,
we give two alternative concepts
for local similarity of distances
(Definitions~\ref{def:strongly_similar_distances} and~\ref{def:infin_similar})
and show that they are slightly stronger than $\cong$ from Definition~\ref{def:locally_similar_distances}.
The concepts introduced in this section
are close to definitions from Gromov~\cite[Definition~1.1]{gromov_riemannian} and Lytchak~\cite[Definition~3.2]{Lytchak2005}.

\subsection*{Strongly locally similar distances}

\begin{defn}
\label{def:strongly_similar_distances}
Let $X$ be a set and
$d_1,d_2\colon X\to[0,+\infty)$
be distances on $X$.
We say that $d_1$ and $d_2$ are
\emph{strongly locally similar}
and write $d_1\strsim d_2$
if $d_1$ and $d_2$ are topologically equivalent and for every $a$ in $X$,
\begin{equation}
\label{eq:def_strongly_similar}
\lim_{\substack{(x,y)\to(a,a)\\x\ne y}}
\frac{d_2(x,y)}{d_1(x,y)}=1.
\end{equation}
\end{defn}

In the context of Definition~\ref{def:strongly_similar_distances},
let $\tau$ be the topology induced by $d_1$ (or $d_2)$, and let $\tau(a)$ be the set of all open neighborhoods of $a$.
Furthermore, let $\Delta(X)$ be the
``diagonal'' of $X^2$:
\[
\Delta(X) \eqdef \{(x,y)\in X^2\colon\ x=y\}.
\]
The limit in~\eqref{eq:def_strongly_similar} is understood in the sense of the product topology on $X^2$.
More explicitly,~\eqref{eq:def_strongly_similar} means that
\begin{equation}
\label{eq:str_sim_explicit}
\forall \eps>0\qquad
\exists U\in\tau(a)\qquad
\forall (x,y)\in U^2\setminus\Delta(X)\qquad
\left|\frac{d_2(x,y)}{d_1(x,y)}-1\right|<\eps.
\end{equation}

\begin{prop}
Let $X$ be a set.
The binary relation $\strsim$,
defined on the set of all distances on $X$,
is an equivalence relation.
\end{prop}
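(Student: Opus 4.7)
The plan is to verify each of the three properties of an equivalence relation (reflexivity, symmetry, transitivity) by reducing~\eqref{eq:def_strongly_similar} to arithmetic properties of limits, in full analogy with the proof that $\cong$ is an equivalence relation. The topological part is immediate in each case, since topological equivalence is itself an equivalence relation.

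For reflexivity, I would note that the identity map trivially induces the same topology, and that for every $a$ in $X$, every pair $(x,y)$ in $X^2\setminus\Delta(X)$ gives the quotient $d(x,y)/d(x,y)=1$; hence~\eqref{eq:def_strongly_similar} holds with $\lim=1$. For symmetry, I would use that $d_1(x,y)>0$ exactly when $x\ne y$, so the quotient $d_1/d_2$ on $X^2\setminus\Delta(X)$ is the reciprocal of the quotient $d_2/d_1$, and the arithmetic property $\lim(1/f)=1/\lim f$ (valid since the limit is $1\ne 0$) transfers~\eqref{eq:def_strongly_similar} from $(d_1,d_2)$ to $(d_2,d_1)$.

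For transitivity, assume $d_1\strsim d_2$ and $d_2\strsim d_3$. Topological equivalence is transitive, so $d_1$ and $d_3$ induce the same topology $\tau$. Fix $a\in X$ and $\eps>0$. I would apply~\eqref{eq:str_sim_explicit} to each pair separately: choose $U_1,U_2\in\tau(a)$ such that for $(x,y)\in U_1^2\setminus\Delta(X)$ the ratio $d_2/d_1$ lies in a small neighborhood of $1$, and similarly for $(x,y)\in U_2^2\setminus\Delta(X)$ the ratio $d_3/d_2$ lies in a small neighborhood of $1$. Setting $U\eqdef U_1\cap U_2$ and writing
\[
\frac{d_3(x,y)}{d_1(x,y)}
=\frac{d_3(x,y)}{d_2(x,y)}\cdot\frac{d_2(x,y)}{d_1(x,y)}
\qquad ((x,y)\in U^2\setminus\Delta(X)),
\]
the product tends to $1\cdot 1=1$; choosing the two neighborhoods carefully (for instance, each giving a ratio within $\sqrt{1+\eps}$ of $1$ for small $\eps$) yields~\eqref{eq:str_sim_explicit} for $(d_1,d_3)$.

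The only mild subtlety, and the one point I would make explicit, is that the factorization above uses $d_2(x,y)\ne 0$, which is guaranteed because $(x,y)\notin\Delta(X)$ and $d_2$ is a distance. No further issue arises: everything reduces to the standard rules $\lim(fg)=(\lim f)(\lim g)$ and $\lim(1/f)=1/\lim f$ for limits equal to $1$, as in the proof of the analogous proposition for $\cong$.
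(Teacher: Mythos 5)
Your proposal is correct and follows the same route the paper intends: the paper's proof simply says ``it follows easily from the arithmetic properties of limits,'' and your argument spells out exactly that, using $\lim(1/f)=1/\lim f$ for symmetry and $\lim(fg)=(\lim f)(\lim g)$ together with intersection of neighborhoods for transitivity. The point you flag about $d_2(x,y)\ne 0$ on $X^2\setminus\Delta(X)$ is a legitimate (if minor) detail that the paper leaves implicit.
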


\begin{proof}
It follows easily from the arithmetic properties of limits.
\end{proof}

In the next proposition,
we characterize the condition $d_1\strsim d_2$ 
avoiding the induced topology.

\begin{prop}
\label{prop:strongly_similar_another_form}
Let $X$ be a set and
$d_1,d_2\colon X\to[0,+\infty)$ be distances on $X$.
Then, $d_1\strsim d_2$ if and only if for every $a$ in $X$, the following limit relations hold:
\begin{align}
\label{eq:str_sim_first_limit}
\lim_{\substack{d_1(x,a)\to 0\\d_1(y,a)\to 0\\x\ne y}}
\frac{d_2(x,y)}{d_1(x,y)}=1,
\\
\label{eq:str_sim_second_limit}
\lim_{\substack{d_2(x,a)\to 0\\d_2(y,a)\to 0\\x\ne y}}
\frac{d_1(x,y)}{d_2(x,y)}=1.
\end{align}
\end{prop}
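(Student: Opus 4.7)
The plan is to mirror the proof of Proposition~\ref{prop:local_similarity_another_form}, adapting the argument to pairs $(x,y)$ tending simultaneously to $a$ rather than a single point. I would split the proof into necessity and sufficiency, with the sufficiency direction being slightly more delicate because it must produce topological equivalence before it can translate the intrinsic form~\eqref{eq:str_sim_first_limit}--\eqref{eq:str_sim_second_limit} of the hypotheses into the topology-based form~\eqref{eq:str_sim_explicit}.

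For necessity, I would assume $d_1 \strsim d_2$. Because $d_1$ and $d_2$ induce the same topology $\tau$, an open neighborhood $U \in \tau(a)$ extracted from~\eqref{eq:str_sim_explicit} contains a $d_1$-ball $B_{d_1}(a,\de) \subseteq U$ and a $d_2$-ball $B_{d_2}(a,\de') \subseteq U$ for appropriate $\de,\de'>0$. Then any pair $(x,y)$ with $d_1(x,a)<\de$ and $d_1(y,a)<\de$, $x\ne y$, lies in $U^2\setminus\Delta(X)$, yielding~\eqref{eq:str_sim_first_limit}; the same argument with $\de'$ yields~\eqref{eq:str_sim_second_limit} (or, alternatively, one reciprocates the limit using the arithmetic of limits, as in Proposition~\ref{prop:local_similarity_another_form}).

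For sufficiency, the first step is to recover topological equivalence from the intrinsic hypotheses. I would specialize~\eqref{eq:str_sim_first_limit} by setting $y=a$: the condition $d_1(y,a)\to 0$ becomes trivial, $x\ne y$ becomes $x\ne a$, and the limit reduces to~\eqref{eq:loc_sim_first_limit}. Specializing~\eqref{eq:str_sim_second_limit} analogously gives~\eqref{eq:loc_sim_second_limit}. Hence Proposition~\ref{prop:local_similarity_another_form} yields $d_1\cong d_2$, and in particular $\tau_{d_1}=\tau_{d_2}=\tau$. Once the topologies coincide, given $\eps>0$ I would invoke~\eqref{eq:str_sim_first_limit} to obtain $\de>0$ such that $|d_2(x,y)/d_1(x,y)-1|<\eps$ whenever $d_1(x,a)<\de$, $d_1(y,a)<\de$, $x\ne y$, and then take $U \eqdef B_{d_1}(a,\de)\in\tau(a)$ to verify~\eqref{eq:str_sim_explicit}.

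The only step that requires a moment of care is the specialization $y=a$ in the sufficiency direction: one must check that this is a legitimate way to extract the single-variable limit~\eqref{eq:loc_sim_first_limit} from the two-variable limit~\eqref{eq:str_sim_first_limit}, which is immediate because the quantifier structure of the $\eps$-$\de$ statements makes the former a strictly weaker claim. Beyond this point, the argument is a routine rewriting of the proof of Proposition~\ref{prop:local_similarity_another_form}, with $B_{d_i}(a,\de)^2$ replacing $B_{d_i}(a,\de)$ throughout.
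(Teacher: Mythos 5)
Your proposal is correct and follows essentially the same route as the paper: both proofs establish necessity by shrinking the open set $U$ from the definition to a $d_1$-ball (resp.\ $d_2$-ball, or reciprocating the limit), and both establish sufficiency by specializing $y=a$ in the two-variable limits to recover topological equivalence before converting back to the neighborhood form~\eqref{eq:str_sim_explicit}. The only cosmetic difference is that you delegate the topological-equivalence step to Proposition~\ref{prop:local_similarity_another_form}, whereas the paper re-derives it inline (applying~\eqref{eq:str_sim_first_limit_explicit} with $\eps=1/2$, $y=a$ to obtain $B_{d_1}(a,r)\subseteq B_{d_2}(a,3r/2)$ and hence $\tau_{d_2}\subseteq\tau_{d_1}$); your version is slightly more modular but the underlying argument is identical.
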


We understand~\eqref{eq:str_sim_first_limit} and~\eqref{eq:str_sim_second_limit} in the following sense, respectively:
\begin{align}
\label{eq:str_sim_first_limit_explicit}
\forall\eps>0\qquad
\exists\de>0\qquad
\forall (x,y)\in B_{d_1}(a,\de)^2\setminus\Delta(X)
\qquad
\left|\frac{d_2(x,y)}{d_1(x,y)}-1\right|<\eps,
\\[1ex]
\label{eq:str_sim_second_limit_explicit}
\forall\eps>0\qquad
\exists\de>0\qquad
\forall (x,y)\in B_{d_2}(a,\de)^2\setminus\Delta(X)
\qquad
\left|\frac{d_1(x,y)}{d_2(x,y)}-1\right|<\eps.
\end{align}

\begin{proof}
Necessity.
Suppose that $d_1\strsim d_2$, i.e.,
\eqref{eq:str_sim_explicit}
holds for every $a$ in $X$.
Since $\tau_{d_1}$ is induced by $d_1$, there exists $\de>0$ such that $B_{d_1}(a,\de)\subseteq U$.
Thereby we get~\eqref{eq:str_sim_first_limit_explicit}.
By the arithmetic properties of limits,
\eqref{eq:def_strongly_similar}
implies a similar limit with the quotient $d_1/d_2$,
which yields~\eqref{eq:str_sim_second_limit_explicit}.

Sufficiency.
Suppose that~\eqref{eq:str_sim_first_limit_explicit}
and~\eqref{eq:str_sim_second_limit_explicit} are fulfilled for every $a$ in $X$.
Applying~\eqref{eq:str_sim_first_limit_explicit} with $\eps=1/2$ and $y=a$ we get $\de>0$ such that for all $x$ in $B_{d_1}(a, \de)$,
\[
d_2(x,a)\le \frac{3}{2}d_1(x,a),
\]
which implies that
$B_{d_1}(a,r)\subseteq B_{d_2}(a,3r/2)$ for every $r$ with $0<r<\de$.
Since $a$ is arbitrary, we get
$\tau_{d_2}\subseteq\tau_{d_1}$.
In a similar way, using~\eqref{eq:str_sim_second_limit_explicit},we obtain that
$\tau_{d_1}\subseteq\tau_{d_2}$.
So, $d_1$ and $d_2$ induce the same topology.
Finally, taking $U\eqdef B_{d_1}(a,\de)$
we see that~\eqref{eq:str_sim_first_limit_explicit} implies~\eqref{eq:str_sim_explicit}.
\end{proof}

\begin{rem}
\label{rem:strlocsim_not_unisim}
In Example~\ref{example:open_interval_with_circular_distance},
the distances $d_X$ and $\rho$ are strongly locally similar but not uniformly equivalent.
\end{rem}

\subsection*{Description of strong local similarity in terms of local dilatation}

Suppose that
$(X,d_X)$ and $(Y,d_Y)$ are metric spaces,
$f\colon X\to Y$, and $a\in X$.
According to Gromov~\cite[Definition~1.1]{gromov_riemannian}
the \emph{local dilatation} of $f$ at the point $a$ is defined as
\begin{equation}
\label{eq:local_dilatation_def}
\dil_a(f)
\eqdef
\lim_{\de\to 0}\;
\sup_{\substack{x,y\in B_{d_1}(a,\de)\\x\ne y}}
\frac{d_2(f(x),f(y))}{d_1(x,y)}.
\end{equation}
This expression can be written as the following upper limit:
\begin{equation}
\label{eq:local_dilatation_as_limsup}
\dil_a(f)
=
\limsup_{\substack{d_1(x,a)\to 0\\
d_1(y,a)\to 0\\
(x,y)\notin\Delta(X)}}
\frac{d_2(f(x),f(y))}{d_1(x,y)}.
\end{equation}

\begin{prop}
Let $X$ be a set and $d_1,d_2$ be distances on $X$.
Then, the following conditions are equivalent:
\begin{itemize}
\item[(a)] $d_1\strsim d_2$;
\item[(b)] the identity function $(X,d_1)\to (X,d_2)$ and its inverse
$(X,d_2)\to(X,d_1)$
have local dilatation $1$ at every point of $X$.
\end{itemize}
\end{prop}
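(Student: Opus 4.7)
The plan is to translate both conditions into statements about $\limsup$'s, via the formula~\eqref{eq:local_dilatation_as_limsup} for the local dilatation and Proposition~\ref{prop:strongly_similar_another_form} for strong similarity, and then exploit the elementary fact that a strictly positive function has a limit equal to $1$ along a given filter if and only if both its $\limsup$ and the $\limsup$ of its reciprocal along that filter equal $1$.

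For the implication (a) $\Rightarrow$ (b), I would invoke Proposition~\ref{prop:strongly_similar_another_form} to rewrite $d_1\strsim d_2$ as the pair of limits~\eqref{eq:str_sim_first_limit}--\eqref{eq:str_sim_second_limit}, each equal to $1$. Since a genuine limit coincides with its $\limsup$, the representation~\eqref{eq:local_dilatation_as_limsup}, applied first to $\id\colon(X,d_1)\to(X,d_2)$ and then to $\id\colon(X,d_2)\to(X,d_1)$, gives $\dil_a(\id)=\dil_a(\id^{-1})=1$ at every $a$ in $X$.

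For the converse (b) $\Rightarrow$ (a), I must first recover the topological equivalence that is built into Definition~\ref{def:strongly_similar_distances}. Specializing the inner supremum in~\eqref{eq:local_dilatation_def} for $\id$ to pairs of the form $(x,a)$ with $x\ne a$ gives, for each $\eps>0$, some $\de>0$ with $d_2(x,a)\le(1+\eps)d_1(x,a)$ on $B_{d_1}(a,\de)\setminus\{a\}$, hence $B_{d_1}(a,\de)\subseteq B_{d_2}(a,(1+\eps)\de)$; the analogous inclusion from $\dil_a(\id^{-1})=1$ yields the reverse containment, so $\tau_{d_1}=\tau_{d_2}$. With the two topologies identified, the conditions $d_2(x,a),d_2(y,a)\to 0$ and $d_1(x,a),d_1(y,a)\to 0$ are interchangeable, so $\dil_a(\id^{-1})=1$ becomes
\[
\limsup_{\substack{d_1(x,a)\to 0\\d_1(y,a)\to 0\\x\ne y}}\frac{d_1(x,y)}{d_2(x,y)}=1.
\]
Applying the identity $\liminf(1/f)=1/\limsup f$ for strictly positive $f$ converts this into the statement that the $\liminf$ of $d_2(x,y)/d_1(x,y)$ along the same filter equals $1$; combined with the matching $\limsup=1$ coming from $\dil_a(\id)=1$, the limit~\eqref{eq:str_sim_first_limit} exists and equals $1$. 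The symmetric argument yields~\eqref{eq:str_sim_second_limit}, and Proposition~\ref{prop:strongly_similar_another_form} delivers $d_1\strsim d_2$.

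The main obstacle is bookkeeping rather than any deep idea: the $\limsup$ defining $\dil_a(\id^{-1})$ is taken along $d_2$-neighborhoods, while~\eqref{eq:str_sim_first_limit} is formulated along $d_1$-neighborhoods, so the reciprocal trick is only available after the topological equivalence has been established. Making this dependence explicit, and noting that $d_1$ and $d_2$ are strictly positive off the diagonal so the quotients and their reciprocals are well defined, completes the proof.
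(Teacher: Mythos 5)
Your proposal is correct and takes essentially the same route as the paper: you translate both conditions into $\limsup$ statements via~\eqref{eq:local_dilatation_as_limsup}, and for (b)$\Rightarrow$(a) you first recover topological equivalence from the dilatation bounds (by specializing to pairs $(x,a)$) and then combine the two one-sided estimates into the two-sided limit required by~\eqref{eq:str_sim_explicit}. The only difference is cosmetic: you phrase the final combination step through the identity $\liminf(1/f)=1/\limsup f$ after transferring one filter to the other, whereas the paper's proof works directly with the $\eps$--$\de$ quantifiers and the intersection $B_{d_1}(a,\de)\cap B_{d_2}(a,\de)$.
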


\begin{proof}
(b)$\Rightarrow$(a).
Suppose (b).
This means that for every $a$ in $X$,
\begin{equation}
\label{eq:loc_dil_1}
\lim_{\de\to 0}\
\sup_{\substack{x,y\in B_{d_1}(a,\de)\\x\ne y}}
\frac{d_2(x,y)}{d_1(x,y)}
= 1
\end{equation}
and
\begin{equation}
\label{eq:loc_dil_2}
\lim_{\de\to 0}\
\sup_{\substack{x,y\in B_{d_2}(a,\de)\\x\ne y}}
\frac{d_1(x,y)}{d_2(x,y)}
= 1.
\end{equation}
For every $a$ in $X$,
the limit relations~\eqref{eq:loc_dil_1} 
and~\eqref{eq:loc_dil_2} 
imply that for every $\eps>0$
there exists $\de>0$ such that
\begin{equation}
\label{eq:loc_dil_1_less}
\forall (x,y)\in B_{d_1}(a,\de)^2\setminus\Delta(X)
\qquad
\frac{d_2(x,y)}{d_1(x,y)}<1+\eps
\end{equation}
and
\begin{equation}
\label{eq:loc_dil_2_less}
\forall (x,y)\in B_{d_2}(a,\de)^2\setminus\Delta(X)
\qquad
\frac{d_1(x,y)}{d_2(x,y)}<1+\eps.
\end{equation}
Similarly to the proof of Proposition~\ref{prop:strongly_similar_another_form}, we can deduce that 
$\tau_{d_1}=\tau_{d_2}$.
After that, for each $a$ in $X$,
from~\eqref{eq:loc_dil_1_less} and~\eqref{eq:loc_dil_2_less}
we get $U\eqdef B_{d_1}(a,\de)\cap B_{d_2}(a,\de)$ in $\tau(a)$ such that
\[
\forall (x,y)\in U^2\setminus\Delta(X)
\qquad
\frac{1}{1+\eps}
<\frac{d_2(x,y)}{d_1(x,y)}<1+\eps.
\]
Therefore, \eqref{eq:str_sim_explicit} holds.

\medskip\noindent
(a)$\Rightarrow$(b).
The relations~\eqref{eq:str_sim_first_limit}
and~\eqref{eq:str_sim_second_limit}
easily imply
the formulas~\eqref{eq:loc_dil_1}
and~\eqref{eq:loc_dil_2}
with superior limits.
\end{proof}

\medskip
\subsection*{Infinitesimally similar distances}

The following concept is inspired by
Lytchak~\cite[Definition~3.2]{Lytchak2005}.

\begin{defn}
\label{def:infin_similar}
Let $X$ be a set and $d_1,d_2\colon X\to[0,+\infty)$ be distances on $X$.
We say that $d_1$ and $d_2$ are \emph{infinitesimally similar} and write
$d_1\infsim d_2$
if $d_1$ and $d_2$ are topologically equivalent and
for every $a$ in $X$,
\begin{equation}
\label{eq:infin_similar}
\lim_{(x,y)\to (a,a)}
\frac{|d_2(x,y)-d_1(x,y)|}{d_1(x,a)+d_1(y,a)}
=0.
\end{equation}
\end{defn}

As usual,
in the definition of
$\lim_{(x,y)\to (a,a)}$
we assume that $(x,y)\ne (a,a)$.
To include the point $(a,a)$,
we rewrite the limit relation~\eqref{eq:infin_similar}
in the following equivalent form:
\begin{equation}
\label{eq:infin_similar2}
\forall\eps>0\quad
\exists V\in\tau(a)\quad
\forall x,y\in V\qquad
|d_2(x,y)-d_1(x,y)|
\le \eps\,\bigl(d_1(x,a)+d_1(y,a)\bigr),
\end{equation}
where $\tau$ is the topology induced by $d_1$ (or by the topologically equivalent distance $d_2$).

\begin{prop}
Let $X$ be a set.
The binary relation $\infsim$,
defined on the set of all distances on $X$,
is an equivalence relation.
\end{prop}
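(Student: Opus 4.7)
The plan is to verify reflexivity, symmetry, and transitivity. Reflexivity is immediate: the numerator $|d(x,y)-d(x,y)|$ vanishes identically, so~\eqref{eq:infin_similar} holds, and any distance is topologically equivalent to itself. Topological equivalence is already an equivalence relation, so the only nontrivial content concerns the limit condition~\eqref{eq:infin_similar}.

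For symmetry, suppose $d_1\infsim d_2$. The numerator $|d_2(x,y)-d_1(x,y)|$ is symmetric in $d_1$ and $d_2$, so the obstacle is to replace the denominator $d_1(x,a)+d_1(y,a)$ with $d_2(x,a)+d_2(y,a)$. The key auxiliary observation is that specializing~\eqref{eq:infin_similar} to $y=a$ (an admissible way to approach $(a,a)$ off the diagonal, since then $x\ne y$) yields
\[
\lim_{\substack{x\to a\\ x\ne a}}\frac{|d_2(x,a)-d_1(x,a)|}{d_1(x,a)}=0,
\]
so in fact $d_1\cong d_2$ in the sense of Definition~\ref{def:locally_similar_distances}. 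Consequently, given any $\eps\in(0,1/2)$ there is a neighborhood $U$ of $a$ on which $(1-\eps)d_1(z,a)\le d_2(z,a)\le(1+\eps)d_1(z,a)$ for every $z\in U\setminus\{a\}$. Summing this inequality over $z\in\{x,y\}$ gives the comparability $d_2(x,a)+d_2(y,a)\ge(1-\eps)\bigl(d_1(x,a)+d_1(y,a)\bigr)$, and dividing~\eqref{eq:infin_similar} by $1-\eps$ transports the limit from $d_1$-denominators to $d_2$-denominators, establishing $d_2\infsim d_1$.

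For transitivity, assume $d_1\infsim d_2$ and $d_2\infsim d_3$. The triangle-type inequality
\[
\frac{|d_3(x,y)-d_1(x,y)|}{d_1(x,a)+d_1(y,a)}
\le
\frac{|d_3(x,y)-d_2(x,y)|}{d_1(x,a)+d_1(y,a)}
+\frac{|d_2(x,y)-d_1(x,y)|}{d_1(x,a)+d_1(y,a)}
\]
reduces the problem to the two hypothesis limits once the first summand is factored as
\[
\frac{|d_3(x,y)-d_2(x,y)|}{d_2(x,a)+d_2(y,a)}\cdot\frac{d_2(x,a)+d_2(y,a)}{d_1(x,a)+d_1(y,a)};
\]
the first factor tends to $0$ by $d_2\infsim d_3$, while the second is bounded near $a$ by the comparability already obtained. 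Topological equivalence of $d_1$ and $d_3$ is inherited from transitivity of $\topequiv$.

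The main obstacle is the mismatch between the denominator appearing in~\eqref{eq:infin_similar} and the denominator natural for the hypothesis that one wants to apply. The unifying idea is that $\infsim$ implies $\cong$, which furnishes exactly the comparability of $d_1$- and $d_2$-based denominators needed both for the denominator swap in the symmetry argument and for the boundedness of the extra factor in the transitivity argument.
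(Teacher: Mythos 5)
Your argument is correct and follows essentially the same route as the paper: reflexivity is trivial, and both symmetry and transitivity hinge on deriving a two-sided comparability between $d_1(\cdot,a)$ and $d_2(\cdot,a)$ by specializing the limit~\eqref{eq:infin_similar} to $y=a$ (the paper does this by plugging $y=a$ and $x=a$ into the $\eps$-inequality to get $\tfrac{1}{2}\bigl(d_1(x,a)+d_1(y,a)\bigr)\le d_2(x,a)+d_2(y,a)$, while you phrase the same specialization as establishing $d_1\cong d_2$), and then using that comparability to swap denominators. Your packaging via $d_1\cong d_2$ is a pleasant observation and in fact anticipates the implication $\infsim\Rightarrow\cong$ that the paper records separately in Proposition~\ref{prop:strsim_infsim_locsim}, but the underlying mechanics coincide.
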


\begin{proof}
Obviously, $\infsim$ is reflexive.

1. Symmetry.
Suppose that $d_1\infsim d_2$.
Let $\tau\eqdef\tau_{d_1}$;
by definition of $\infsim$,
we also have $\tau=\tau_{d_2}$.
Let $a\in X$ and $\eps>0$.
From~\eqref{eq:infin_similar2},
there is $V$ in $\tau(a)$ such that
for every $x,y$ in $V$,
\begin{equation}
\label{eq:application_def_infin}
|d_1(x, y)-d_2(x, y)|
\le
\frac{\min\{\eps,1\}}{2}\,
\bigl(d_1(x, a) + d_1(y, a)\bigr).
\end{equation}
Taking in~\eqref{eq:application_def_infin}
$y = a$ and then $x = a$,  we get
\[
d_1(x, a) - d_2(x, a)
\leq \frac{d_1(x, a)}{2},
\qquad
d_1(y, a) - d_2(y, a)
\leq \frac{d_1(y,a)}{2}.
\]
We sum the last two inequalities and simplify:
\begin{equation}
\label{eq:d1xa_plus_d1ya_le_C_d2xa_d2ya}
d_1(x,a)+d_1(y,a)
\leq
2\bigl(d_2(x,a) + d_2(y,a)\bigr).
\end{equation}
Thus, for every $x,y$ in $V$,
by~\eqref{eq:application_def_infin}
and~\eqref{eq:d1xa_plus_d1ya_le_C_d2xa_d2ya},
\[
|d_2(x,y)-d_1(x,y)|
\le
\frac{\eps}{2}\,
\bigl(d_1(x, a) + d_1(y, a)\bigr)
\le
\eps\,
\bigl(d_2(x, a) + d_2(y, a)\bigr).
\]
2. Transitivity.
Suppose that $d_1\infsim d_2$
and $d_2\infsim d_3$.
Let $\tau\eqdef \tau_{d_1}$.
Then, $\tau=\tau_{d_2}=\tau_{d_3}$.
By symmetry, we have $d_2\infsim d_1$.
Let $a\in X$ and $\eps>0$.
There exists $V$ in $\tau(a)$ such that
for all $x,y,z$ in $V$,
\begin{align*}
\bigl|d_1(x,y)-d_2(x,y)\bigr|
&\le
\frac{\eps}{3}\,
\bigl(d_1(x,a)+d_1(y,a)\bigr),
\\[1ex]
\bigl|d_2(x,y)-d_1(x,y)\bigr|
&\le
\frac{1}{2}\,
\bigl(d_2(x,a)+d_2(y,a)\bigr),
\end{align*}
and
\[
\bigl|d_2(x,y)-d_3(x,y)\bigr|
\le
\frac{\eps}{3}\,
\bigl(d_2(x,a)+d_2(y,a)\bigr).
\]
The second of these inequalities
implies an analog of~\eqref{eq:d1xa_plus_d1ya_le_C_d2xa_d2ya},
with swapped $d_1$ and $d_2$:
\[
d_2(x,a)+d_2(y,a)
\leq
2 \bigl(d_1(x,a) + d_1(y,a)\bigr).
\]
Thus,
\begin{align*}
\bigl|d_1(x,y)-d_3(x,y)\bigr|
&\le
\bigl|d_1(x,y)-d_2(x,y)\bigr|
+\bigl|d_2(x,y)-d_3(x,y)\bigr|
\\[1ex]
&\le
\frac{\eps}{3}\,
\bigl(d_1(x,a) + d_1(y,a)\bigr)
+
\frac{\eps}{3}\,
\bigl(d_2(x,a) + d_2(y,a)\bigr)
\\[1ex]
&\le
\eps\,
\bigl(d_1(x,a) + d_1(y,a)\bigr).
\qedhere
\end{align*}
\end{proof}

\medskip
\subsection*{Comparison of strong local similarity,
infinitesimal similarity,
and local similarity}

\begin{prop}
\label{prop:strsim_implies_infsim}
Let $X$ be a set and $d_1,d_2$
be distances on $X$.
If $d_1\strsim d_2$,
then $d_1\infsim d_2$.
\end{prop}

\begin{proof}
Suppose that $d_1\strsim d_2$.
Let $a\in X$.
We use the following upper bound
for the expression in~\eqref{eq:infin_similar}:
\[
\frac{|d_2(x,y)-d_1(x,y)|}{d_1(x,a)+d_1(y,a)}
\le
\frac{|d_2(x,y)-d_1(x,y)|}{d_1(x,y)}
=
\left|\frac{d_2(x,y)}{d_1(x,y)}-1\right|.
\]
As $(x,y)\to (a,a)$ and $x\ne y$,
the last expression tends to zero.
\end{proof}

\begin{prop}
\label{prop:infsim_implies_locsim}
Let $X$ be a set and $d_1,d_2$
be distances on $X$.
If $d_1\infsim d_2$, then $d_1\cong d_2$.
\end{prop}

\begin{proof}
Suppose that $d_1\infsim d_2$.
Let $a\in X$.
We suppose that the limit relation~\eqref{eq:infin_similar} holds.
Fixing $y=a$ we get the following consequence of~\eqref{eq:infin_similar}:
\[
\lim_{x\to a}
\frac{|d_2(x,a)-d_1(x,a)|}{d_1(x,a)}
=0.
\]
It is equivalent to~\eqref{eq:def_loc_sim}.
\end{proof}

The next example shows, in particular,
that $d_1\infsim d_2$ is not equivalent to $d_1\cong d_2$.

\begin{example}
\label{example:hook}
Let $X$ be the following ``hook'' in $\bR^2$:
\[
X
=
\Bigl([0,+\infty)\times\{0\}\Bigr)
\cup
\Bigl(\{0\}\times[0,+\infty)\Bigr)
=\{(x,0)\colon\ x\ge0\}
\cup
\{(0,y)\colon\ y\ge0\}.
\]
We denote by $d_1$ the ``taxi distance'' on $X$ and by $d_2$ the Euclidean distance on $X$:
\[
d_1((u,v),(r,s))
=|u-r|+|v-s|,
\qquad
d_2((u,v),(r,s))
=\sqrt{(u-r)^2+(v-s)^2}.
\]
For two points belonging to the same halfaxis, we have
\[
d_1((u,0),(r,0))
=|u-r|
=d_2((u,0),(r,0)),
\]
while for two points belonging to the different halfaxes,
\[
d_1((u,0),(0,s))
=u+s,
\qquad
d_2((u,0),(0,s))
=\sqrt{u^2+s^2}.
\]
See Figure~\ref{fig:hook}.

Notice that if $a=(0,0)$, then
$d_2(x,a)=d_1(x,a)$ for every $x$ in $X$.
If $a\ne (0,0)$, then $d_2(x,a)=d_1(x,a)$
for every $x$ sufficiently close to $a$.
Therefore,
$d_1\cong d_2$.

It is also easy to see that
$d_1\lipequiv d_2$
and $d_2^\ast=d_1^\ast=d_1$.

On the other hand,
working with $a=(0,0)$,
$x=(u,0)$, and $y=(0,u)$, we get for every $u>0$:
\[
\frac{d_2((u,0),(0,u))}{d_1((u,0),(0,u))}
=
\frac{\sqrt{2}\,u}{2u}
=
\frac{\sqrt{2}}{2}
\]
and
\[
\frac{|d_2((u,0),(0,u))-d_1((u,0),(0,u))|}{d_1((u,0),(0,0))+d_1((0,u),(0,0))}
=\frac{2-\sqrt{2}}{2}.
\]
Therefore, $d_1$ and $d_2$
are not strongly locally similar
nor infinitesimally similar.
\hfill\qedsymbol
\end{example}

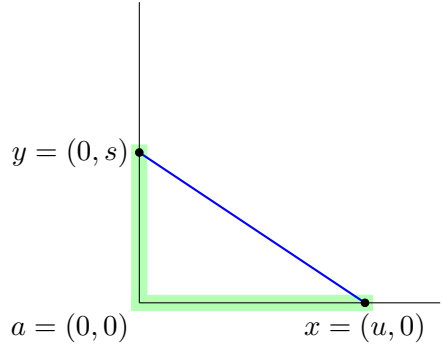
\begin{figure}[htp]
\centering
\begin{tikzpicture}
\filldraw [lightgreen]
  (-0.1, -0.1) rectangle (3.1, 0.1);
\filldraw [lightgreen]
  (-0.1, -0.1) rectangle (0.1, 2.1);
\draw [thick, blue] 
  (3, 0) -- (0, 2);
\draw (0, 0) -- (4, 0);
\draw (0, 0) -- (0, 4);
\node at (0, 0) [below left] {$a=(0,0)$};
\filldraw (3, 0) circle (0.05cm);
\node at (3, 0) [below] {$x=(u,0)$};
\filldraw (0, 2) circle (0.05cm);
\node at (0, 2) [left] {$y=(0,s)$};
\end{tikzpicture}
\caption{Distances from Example~\ref{example:hook}.
\label{fig:hook}}
\end{figure}

Omer Cantor
(\myurl{https://orcid.org/0009-0007-2969-7170})
constructed the following example and explained it to us.
It shows that $d_1\infsim d_2$ is not equivalent to $d_1\strsim d_2$.

\begin{example}
\label{example:OmerCantor}
For each parameter $\al$ satisfying $0<\al\le 1$, we define
$\rho_\al\colon[-1,1]^2\to[0,+\infty)$ by
\[
\rho_\al(x,y)
\eqdef
\begin{cases}
|x-y|,
& xy\ge 0;
\\
|x+y|+\al \bigl(\min\{|x|,|y|\}\bigr)^2,
& xy<0.
\end{cases}
\]
Intuitively, the definition of $\rho_\al$ means that for each $u$ in $(0,1]$ there is a ``teleportation'' of cost $\al u^2$
between $u$ and $-u$.
Thus, if $0<u<v\le 1$, then $\rho_\al(-u,v)$ as obtained as the sum of two terms:
the teleportation between $-u$ and $u$
and the usual walk between $u$ and $v$.
It is easy to verify that $\rho_\al$
is a distance on $[-1,1]$.
It induces the usual topology on $[-1,1]$.
Moreover, if $a \ne 0$, then there exists an open neighborhood $V$ of $a$ such that
$\rho_\al(x,y)=|x-y|$ for each $x,y$ in $V$.
Consider two different distances from this family,
say $\rho_{1/2}$ and $\rho_1$.
For $a=0$ and $x,y\in[-1,1]$ such that $xy<0$, we have
\[
\frac{|\rho_{1/2}(x,y)-\rho_1(x,y)|}{\rho_1(x,0)+\rho_1(0,y)}
=
\frac{\frac{1}{2}\bigl(\min\{|x|,|y|\}\bigr)^2}{|x|+|y|}.
\]
Obviously, this expression tends to $0$ as $(x,y)\to(0,0)$.
For all other choices of $a,x,y$, the corresponding quotient is equal to $0$.
Therefore, $\rho_{1/2}\infsim\rho_1$.

On the other hand, for $a=0$,
$x_k=1/k$, and $y_k=-1/k$,
we consider the quotient
\[
\frac{\rho_{1/2}(x_k,y_k)}{\rho_1(x_k,y_k)}
=
\frac{\frac{1}{2} (1/k)^2}{(1/k)^2}
=
\frac{1}{2}.
\]
As $k\to\infty$, we have $(x_k,y_k)\to(0,0)$,
but the quotient above does not tend to $1$.
Hence, $\rho_{1/2}$ and $\rho_1$ are not strongly locally similar.
\end{example}

\medskip
\section{Locally similar distances and composition}
\label{sec:locally_similar_distances_and_composition}

In this section, we consider a particular situation where two distances $d_1$ and $d_2$ are related by $d_2=f\circ d_1$. We provide a sufficient condition for
$d_2\strsim d_1$ in terms of $f$.
More precisely, for convenience in applications, we provide a sufficient condition for
$d_2\strsim Q d_1$, with $0<Q<+\infty$.
According to Propositions~\ref{prop:strsim_implies_infsim}
and~\ref{prop:infsim_implies_locsim},
our condition is also sufficient for
$d_2\cong Q d_1$.

\begin{lem}
\label{lem:f_cannot_decrease}
Let $f\colon[0,+\infty)\to[0,+\infty)$ and $C>0$ be such that $f(t) \geq C\min\{t, 1\}$ for every $t\ge 0$.
Then
\begin{itemize}
\item[1)] for every $t>0$, $f(t)>0$;
\item[2)] $\displaystyle \lim_{f(t)\to 0^+} t=0$,
i.e.,
\[
\forall\eta>0\quad
\exists\xi>0\quad
\forall t\in[0,+\infty)
\quad
\Bigl(
0<f(t)<\xi
\quad\Longrightarrow\quad
0<t<\eta
\Bigr).
\]
\end{itemize}
\end{lem}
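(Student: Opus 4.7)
The plan is to derive both parts directly from the hypothesis $f(t)\ge C\min\{t,1\}$, without using any further structural information about $f$. Part~1 is immediate: if $t>0$, then $\min\{t,1\}>0$, so $f(t)\ge C\min\{t,1\}>0$.

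For part~2, I would first observe that shrinking $\eta$ only strengthens the conclusion, so without loss of generality I may assume $\eta\le 1$. I then take $\xi$ to be $C\eta$, adjusted downward if necessary to exclude the point $t=0$: if $f(0)>0$, I replace $\xi$ by $\min\{C\eta,\,f(0)\}$, while if $f(0)=0$, I keep $\xi=C\eta$. In either case $\xi>0$. With this choice, the premise $0<f(t)<\xi$ automatically fails when $t=0$ (either because $f(0)=0$, or because $f(0)\ge \xi$). For $t>0$, combining $f(t)\ge C\min\{t,1\}$ with $f(t)<\xi\le C\eta$ yields $\min\{t,1\}<\eta\le 1$, which forces $\min\{t,1\}=t$ and hence $0<t<\eta$.

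The only mild obstacle is that the hypothesis $0<f(t)$ does not by itself exclude $t=0$, since the lemma allows $f(0)$ to be positive; this is why $\xi$ must be chosen small enough relative to $f(0)$ in that case. Beyond this bookkeeping, the argument is a one-line manipulation of the bound.
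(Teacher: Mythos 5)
Your proof is correct and follows essentially the same approach as the paper: choose $\xi$ proportional to $\eta$ but capped so that the bound $f(t)\ge C\min\{t,1\}$ forces $\min\{t,1\}$ to equal $t$, and then conclude $t<\eta$. The paper achieves the cap by taking $\xi=\min\{C\eta,\,C/2\}$ and arguing $t<1$ by contradiction; you instead normalize $\eta\le 1$ and take $\xi=C\eta$, which makes the step $\min\{t,1\}=t$ fall out directly. These are cosmetic variants of the same estimate.

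One place where you are actually more careful than the paper: the paper's proof begins ``Let $t$ be a positive number such that $0<f(t)<\xi$'', quietly restricting to $t>0$, whereas the statement quantifies over all $t\ge 0$ and demands the conclusion $0<t$. If $f(0)>0$ (which the hypothesis permits, since it only gives $f(0)\ge 0$), then with the paper's $\xi=\min\{C\eta,C/2\}$ the premise $0<f(0)<\xi$ could hold for $t=0$, violating the conclusion. You noticed this and adjusted $\xi$ downward to $\min\{C\eta,f(0)\}$ in that case, which closes the gap. In the paper's actual application (Proposition~\ref{prop:d_2 = f circ d_1}) one has $f(0)=0$, so the issue is harmless there, but your version proves the lemma exactly as stated.
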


\begin{proof}
Property 1) follows directly from the hypothesis.
We are going to prove 2).
Let $\eta>0$.
Define $\xi\coloneqq \min\{C\eta, C/2 \}$.
Let $t$ be a positive number such that $0<f(t)<\xi$.
Notice that $t<1$, since otherwise we get:
\[
C
= C\min\{t, 1\}
\leq f(t)
<\frac{C}{2}.
\]
Therefore, the result follows:
\[
t = \min\{t, 1\}
\leq \frac{f(t)}{C}
< \frac{\xi}{C}
\le \eta.
\qedhere
\]
\end{proof}

\begin{prop}\label{prop:d_2 = f circ d_1}
Let $(X,d_1)$ be a metric space,
$0<Q<+\infty$,
and $f\colon[0,+\infty)\to[0,+\infty)$ be a function with the following properties:
\begin{enumerate}
\item[1)]
$f(0)=0$;
\item[2)]
$\lim_{t\to 0^+}\frac{f(t)}{t} = Q$;
\item[3)]
there is a constant $C>0$ such that for every $t\geq 0$,
$f(t)\geq C \min\{t, 1\}$.
\end{enumerate}
Define $d_2\eqdef f\circ d_1$.
Suppose that $d_2$ is a distance.
Then,
$d_2\strsim Q d_1$
and
$d_2 \cong Q d_1$.
\end{prop}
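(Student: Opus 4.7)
The plan is to apply Proposition~\ref{prop:strongly_similar_another_form} to the distances $Q\,d_1$ and $d_2$ (note that $Q\,d_1$ is a distance since $Q>0$). This reduces the proof of $d_2 \strsim Q\,d_1$ to verifying, for every $a\in X$, the two limit relations
\[
\lim_{\substack{d_1(x,a)\to 0\\ d_1(y,a)\to 0\\ x\ne y}}
\frac{d_2(x,y)}{Q\,d_1(x,y)}=1,
\qquad
\lim_{\substack{d_2(x,a)\to 0\\ d_2(y,a)\to 0\\ x\ne y}}
\frac{Q\,d_1(x,y)}{d_2(x,y)}=1.
\]
Once these are established, the conclusion $d_2\cong Q\,d_1$ is immediate from Proposition~\ref{prop:strsim_infsim_locsim}.

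For the first limit, fix $a\in X$ and $\eps\in(0,1)$. Property~2) furnishes $\delta_0>0$ with $|f(t)/t-Q|<Q\eps$ whenever $0<t<\delta_0$. Set $\delta\eqdef\delta_0/2$. For $x,y\in B_{d_1}(a,\delta)$ with $x\ne y$, the triangle inequality yields $0<d_1(x,y)\le d_1(x,a)+d_1(y,a)<\delta_0$. Substituting $t=d_1(x,y)$ into the bound from property~2), and dividing by $Q$, gives $|d_2(x,y)/(Q\,d_1(x,y))-1|<\eps$, as required.

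For the second limit, the essential step is transferring smallness of $d_2$-distances back to smallness of $d_1$-distances, which is exactly the content of Lemma~\ref{lem:f_cannot_decrease}, part~2). Given $\eps\in(0,1)$, the first limit produces $\eta>0$ such that $d_1(x,a),d_1(y,a)<\eta$ and $x\ne y$ force $|d_2(x,y)/(Q\,d_1(x,y))-1|<\eps/2$. Lemma~\ref{lem:f_cannot_decrease} then furnishes $\xi>0$ such that $0<f(t)<\xi$ implies $0<t<\eta$. For $x,y\in B_{d_2}(a,\xi)$ with $x\ne y$, each of $d_1(x,a)$, $d_1(y,a)$ is either $0$ (if the corresponding point equals $a$) or positive with $f$-value below $\xi$; in either case it is less than $\eta$. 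The first limit then places $d_2(x,y)/(Q\,d_1(x,y))$ in the interval $(1-\eps/2,1+\eps/2)\subset(1/2,3/2)$, and inverting this ratio yields $|Q\,d_1(x,y)/d_2(x,y)-1|<\eps$.

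The main obstacle is precisely this second limit: property~2) only controls $f(t)/t$ when the argument $t$ itself is small, whereas the hypothesis provides only the smallness of the composed values $f(d_1(\cdot,a))$. The lower bound $f(t)\ge C\min\{t,1\}$ from property~3) is the exact ingredient needed to force $d_1$ to inherit smallness from $d_2$, and it enters the argument only through Lemma~\ref{lem:f_cannot_decrease}.
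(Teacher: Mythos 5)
Your proof is correct and follows essentially the same route as the paper: both verify the two limits of Proposition~\ref{prop:strongly_similar_another_form} for $Q\,d_1$ and $d_2$, using property~2) for the first limit and Lemma~\ref{lem:f_cannot_decrease} to transfer smallness from $d_2$ back to $d_1$ for the second. The only cosmetic difference is in closing the second limit: the paper applies property~2) once more in the form $\lim_{t\to 0^+} t/f(t)=1/Q$ directly to $t=d_1(x,y)$, whereas you reuse the first limit (with $\eps/2$) and invert the resulting ratio, which works equally well.
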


\begin{proof}
Part 1. Let us verify an analog of~\eqref{eq:str_sim_first_limit} for $Q d_1$ and $d_2$.
Given $\eps>0$,
by 2), there is $\de_1>0$ such that, if $0<t<\de_1$, then
\[
\left|\frac{f(t)}{t}- Q\right|<Q\eps .
\]
Define
$\de_2\eqdef \frac{Q \de_1}{2}$.
Let $a\in X$ and $x,y\in X$ such that
\[
x \ne y,\qquad
Q d_1(x, a)<\de_2,\qquad
Q d_1(y, a)<\de_2.
\]
Put $s \eqdef d_1(x,y)$.
Then $0<s \leq d_1(x, a) + d_1(a, y)<2\de_2/Q = \de_1$ and
\[
\left|\frac{d_2(x,y)}{Qd_1(x,y)}-1\right|=\frac{1}{Q}
\left|\frac{d_2(x,y)}{d_1(x,y)}-Q\right|
=\frac{1}{Q}
\left|\frac{f(s)}{s}-Q\right|
<\eps.
\]
Part 2. Let us verify an analog of~\eqref{eq:str_sim_second_limit} for $Q d_1$ and $d_2$.
By property 2),
\[
\lim_{t\to 0^+}
\frac{t}{f(t)} = \frac{1}{Q}.
\]
Given $\eps>0$,
there is $\de_3>0$ such that
if $0<t<\de_3$, then
\[
\left|\frac{t}{f(t)}
-\frac{1}{Q}\right|
<\frac{\eps}{Q}.
\]
From Proposition~\ref{lem:f_cannot_decrease}, there is $\de_4>0$ such that if $0<f(t)<\de_4$, then $0<t<\de_3$. 

Let $a\in X$ and $x, y\in X$ such that $x\neq y$,
$d_2(x, a)<\de_4/2$,
and $d_2(y, a)<\de_4/2$.
Put $s\eqdef d_1(x,y)$.
Then $0< f(s)=d_2(x,y)\leq d_2(x, a) + d_2(a, y) < \de_4$.
Therefore,
$0<s<\de_3$, and
\[
\left|\frac{Q d_1(x,y)}{d_2(x,y)}
-1\right|
=
Q
\left|\frac{d_1(x,y)}{d_2(x,y)}
-\frac{1}{Q}\right|
=Q\left|\frac{s}{f(s)}-\frac{1}{Q} \right| < \eps.
\qedhere
\]
\end{proof}

\begin{cor}
\label{cor:f_concave}
Let $(X,d_1)$ be a metric space,
$0<Q<+\infty$,
and $f\colon[0,+\infty)\to[0,+\infty)$ be a concave increasing function such that $f(0)=0$ and
\[
\lim_{t\to 0^+}\frac{f(t)}{t} = Q.
\]
Define $d_2\eqdef f\circ d_1$.
Then,
$d_2\strsim Q d_1$
and
$d_2\cong Q d_1$.
\end{cor}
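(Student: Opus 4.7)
The plan is to reduce the corollary to Proposition~\ref{prop:d_2 = f circ d_1} by verifying its two not-automatically-given hypotheses: the linear-type lower bound $f(t)\ge C\min\{t,1\}$ for $t\ge 0$, and that $d_2 = f\circ d_1$ is indeed a distance on $X$. Items 1) and 2) of that proposition are directly in our hypotheses.

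For the lower bound, I would first record the standard fact that concavity of $f$ on $[0,+\infty)$ together with $f(0)=0$ implies that the ratio $t\mapsto f(t)/t$ is non-increasing on $(0,+\infty)$. Since $f$ is increasing and the ratio tends to $Q>0$ at the origin, $f$ cannot vanish at any positive point; in particular, $f(1)>0$. The monotonicity of the ratio then yields $f(t)/t\ge f(1)$ for $0<t\le 1$, so $f(t)\ge f(1)\,t$ on $(0,1]$; and $f(t)\ge f(1)$ on $[1,+\infty)$ by monotonicity of $f$ itself. Thus the choice $C\eqdef f(1)$ works.

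To see that $d_2$ is a distance, I would check each axiom. Non-negativity and symmetry are immediate from the corresponding properties of $d_1$ and from $f$ mapping into $[0,+\infty)$. For the identity of indiscernibles, the lower bound just obtained shows $f(s)>0$ for every $s>0$, so $d_2(x,y)=0$ iff $f(d_1(x,y))=0$ iff $d_1(x,y)=0$ iff $x=y$. The triangle inequality reduces to subadditivity of $f$, which is the classical consequence of concavity together with $f(0)=0$: for $a,b>0$, writing $a=\frac{a}{a+b}(a+b)+\frac{b}{a+b}\cdot 0$ and applying concavity gives $f(a)\ge\frac{a}{a+b}f(a+b)$, and symmetrically $f(b)\ge\frac{b}{a+b}f(a+b)$; summing yields $f(a+b)\le f(a)+f(b)$. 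Composing with $d_1$ and using its triangle inequality, we get $d_2(x,z)=f(d_1(x,z))\le f(d_1(x,y)+d_1(y,z))\le f(d_1(x,y))+f(d_1(y,z))=d_2(x,y)+d_2(y,z)$.

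With all hypotheses of Proposition~\ref{prop:d_2 = f circ d_1} verified, both conclusions $d_2\strsim Q d_1$ and $d_2\cong Q d_1$ follow immediately. No step is really difficult; the only mildly delicate observations are that concavity plus $f(0)=0$ simultaneously delivers the monotonicity of $f(t)/t$ (yielding the lower bound with $C=f(1)$) and the subadditivity of $f$ (yielding the triangle inequality for $d_2$).
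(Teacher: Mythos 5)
Your proof is correct and takes essentially the same approach as the paper: both reduce to Proposition~\ref{prop:d_2 = f circ d_1} and obtain the lower bound $f(t)\ge f(1)\min\{t,1\}$ with $C=f(1)$ from concavity and $f(0)=0$ (the paper's line $tf(1)=(1-t)f(0)+tf(1)\le f(t)$ is exactly your non-increasing-ratio observation in disguise). You merely spell out the subadditivity argument for the triangle inequality and the positivity of $f$ at $1$, which the paper dispatches with a citation to Deza--Deza and a brief assertion.
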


\begin{proof}
The assumptions on $f$ imply that $f(t)>0$ for $t>0$.
Moreover, $d_2$ is a distance because 
$f$ is concave and increasing,
and $f(0)=0$
(see a similar fact in
\cite[Section~4.1]{deza_encyclopedia}).
We will verify property 3) from Proposition~\ref{prop:d_2 = f circ d_1}.
Let $t\ge 0$. 
If $t<1$, then
\[
tf(1) 
= 
(1-t)f(0) + tf(1) 
\leq 
f(t).
\]
If $t\geq 1$, then $f(1)\leq f(t)$,
because $f$ is increasing.
Joining these two cases,
for every $t\geq 0$ we get
\[
f(1)\min\{t, 1\} \leq f(t). \qedhere
\]
\end{proof}

\section{The intrinsic distance induced by a given distance}
\label{sec:intrinsic_distance}

In this section,
we recall the definition of the intrinsic distance $d^\ast$ induced by a given distance $d$.
It is a standard construction in the theory of length spaces (see, for example, Gromov~\cite[Example 1.4 (a)]{gromov_riemannian},
Burago, Burago, and Ivanov~\cite[Section~2.3]{burago_metric},
Kunzinger and Steinbauer~\cite[Section~1.2.3]{kunzinger_alexandrov}).
In this section, we also compute $d^\ast$ for several examples.
In particular, we show that $\lipequiv$ does not imply $\intrequiv$
and $\intrequiv$ does not imply $\topequiv$.

We now fix some notation
for the intermediate concepts
which will be used in the examples and the proof of the main result.

\begin{defn}[continuous paths between two points]
\label{def:continuous_paths}
Let $(X, d)$ be a metric space. 
For each $x, y$ in $X$, define
\[
\Ga_d(x, y) 
\eqdef
\bigl\{
\ga\in C([0,1], X, d)\colon \quad
\ga(0) = x,\quad
\ga(1) = y
\bigr\}.
\]
Here $C([0,1],X,d)$ stands for the set of all functions
$\ga\colon[0,1]\to X$ that are continuous with respect to $\tau_d$.
\end{defn}

\begin{defn}[Riemannian partitions of an interval]
Let $[a, b]\subseteq \bR$ be an arbitrary interval.
We denote by $\cP([a, b])$ the set of all finite subsets of $[a,b]$ that include $a$ and $b$.
In particular,
we denote $\cP([0, 1])$ by $\cP$.
\end{defn}

Given $P$ in $\cP([a, b])$, we enumerate its elements in ascending order:
\begin{equation}
\label{eq:ordered_partition}
P = \{t_0, t_1, \ldots, t_m\}, 
\qquad
a = t_0<t_1<\cdots<t_m =b.
\end{equation}

\begin{defn}[the polygonal length of a path with respect to a partition]
Let $(X, d)$ be a metric space,
$x, y\in X$, $\ga\in \Ga_d(x, y)$,
and $P\in\cP$ be of the form~\eqref{eq:ordered_partition}.
Define
\[
S(d, \ga, P) 
\eqdef
\sum_{k=0}^{m-1}
d(\ga(t_{k}), \ga(t_{k+1})).
\]
\end{defn}

\begin{lem}\label{lem:polygonal_partition_is_increasing_wrt_contention}
Let $(X, d)$ be a metric space,
$x, y\in X$, and $\ga\in\Ga_d(x, y)$. 
If $P, Q\in\cP$
such that $P\subseteq Q$, then
\[
S(d, \ga, P) \leq S(d, \gamma, Q).
\]
\end{lem}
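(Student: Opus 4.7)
The plan is to prove this by a standard refinement argument, reducing the general case to the case of adding a single point.

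First, I would argue that it suffices to consider the case $|Q \setminus P| = 1$. Indeed, if $Q = P \cup \{s_1, \ldots, s_n\}$ with the $s_i$ pairwise distinct and outside $P$, then we can form a chain of partitions
\[
P = P_0 \subsetneq P_1 \subsetneq \cdots \subsetneq P_n = Q,
\]
where $P_j = P \cup \{s_1, \ldots, s_j\}$. Each inclusion $P_{j-1} \subseteq P_j$ adds exactly one point, so once the one-point case is established, the general inequality follows by transitivity.

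Next, I would handle the case $Q = P \cup \{s\}$ where $s \notin P$. Writing $P$ in ascending order as in~\eqref{eq:ordered_partition}, there is a unique index $k \in \{0, 1, \ldots, m-1\}$ such that $t_k < s < t_{k+1}$. Then $Q$, enumerated in ascending order, equals $\{t_0, \ldots, t_k, s, t_{k+1}, \ldots, t_m\}$. Comparing the two sums $S(d, \gamma, P)$ and $S(d, \gamma, Q)$ term by term, all contributions away from the index $k$ cancel, and the inequality reduces to
\[
d(\gamma(t_k), \gamma(t_{k+1})) \leq d(\gamma(t_k), \gamma(s)) + d(\gamma(s), \gamma(t_{k+1})),
\]
which is just the triangle inequality applied to the three points $\gamma(t_k)$, $\gamma(s)$, $\gamma(t_{k+1})$ in $X$.

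There is no real obstacle here: the proof is essentially bookkeeping plus one use of the triangle inequality. The only point requiring a moment of care is identifying the correct interval $(t_k, t_{k+1})$ containing the new point $s$ and verifying that all other summands in $S(d, \gamma, P)$ and $S(d, \gamma, Q)$ match up exactly. No hypothesis on $\gamma$ beyond its definition as an element of $\Gamma_d(x, y)$ is used; in particular, continuity of $\gamma$ plays no role in this lemma.
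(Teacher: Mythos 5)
Your proof is correct and is exactly what the paper's one-line proof ("iterative application of the triangle inequality") is pointing to: reduce to the one-point refinement case and apply the triangle inequality once. No differences to flag.
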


\begin{proof}
It follows by an iterative application of the triangle inequality.
\end{proof}

\begin{defn}[the length of a path with respect to a distance]
\label{def:length_of_gamma_wrt_d}
Let $(X, d)$ be a metric space,
$x, y\in X$,
and $\ga\in \Ga_d(x, y)$.
Define
\[
L(d, \ga)
\eqdef
\sup_{P\in \cP}
S(d, \ga, P).
\]
\end{defn}

\begin{defn}[the intrinsic distance induced by a distance]
\label{def:intrinsic_distance}
Let $(X, d)$ be a metric space.
Define
$d^\ast\colon X^2\to [0,+\infty]$
by the following rule:
\[
d^\ast(x, y)
\eqdef
\inf_{\ga\in \Ga_d(x, y)}
L(d, \ga).
\]
\end{defn}

Following Definition~\ref{def:intrinsic_distance},
if $\Ga_d(x, y)=\emptyset$
(i.e., if there are no continuous paths between $x$ and $y$),
then $d^\ast(x,y)=+\infty$.

It is well known
that
$d^\ast$ is a $[0,+\infty]$-valued
distance on $X$,
such that $d^\ast(x,y)\ge d(x,y)$ for every $x,y$ in $X$.
Moreover, it is well known that $(d^\ast)^\ast=d^\ast$.
These facts are proved in the books cited
at the beginning of this section.

\begin{rem}
\label{rem:general_intervals_instead_of_unit_interval}
In this paper, we define paths as continuous functions with domain $[0, 1]$. 
Similarly, one can consider paths defined on arbitrary closed intervals of the form $[a, b]$. 
Such generalization does not change $L(d, \gamma)$ or $d^\ast(x, y)$, 
since any path defined on an arbitrary interval $[a, b]$ can be reparametrized to $[0, 1]$.
\end{rem}

\begin{prop}
\label{prop:lipequiv_implies_lipequiv_for_intrinsic_distances}
Let $d_1,d_2$ be distances on $X$ such that $d_1\lipequiv d_2$.
Then $d_1^\ast\lipequiv d_2^\ast$.
\end{prop}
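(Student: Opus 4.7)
The plan is to push the biLipschitz inequality through the three-step construction of $d^\ast$: partial sum, supremum over partitions, infimum over paths.

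First, from $d_1\lipequiv d_2$ we fix constants $C_1,C_2>0$ such that
\[
C_1\, d_1(x,y)\le d_2(x,y)\le C_2\, d_1(x,y)
\qquad(x,y\in X).
\]
A preliminary observation, which turns out to be the only conceptual point, is that biLipschitz equivalence implies topological equivalence, hence the identity map is a homeomorphism $(X,\tau_{d_1})\to(X,\tau_{d_2})$. Consequently $C([0,1],X,d_1)=C([0,1],X,d_2)$ and therefore $\Ga_{d_1}(x,y)=\Ga_{d_2}(x,y)$ for every $x,y\in X$. Call this common set $\Ga(x,y)$. This is crucial because the infimum in Definition~\ref{def:intrinsic_distance} is taken over this set of paths, and we need it to be the same for $d_1$ and $d_2$.

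Next, fix $\ga\in\Ga(x,y)$ and $P\in\cP$. Summing the inequality $d_2\le C_2\,d_1$ term by term gives
\[
S(d_2,\ga,P)
=\sum_{k} d_2(\ga(t_k),\ga(t_{k+1}))
\le C_2 \sum_{k} d_1(\ga(t_k),\ga(t_{k+1}))
= C_2\, S(d_1,\ga,P).
\]
Taking the supremum over $P\in\cP$ and using that multiplication by a positive constant commutes with the supremum yields $L(d_2,\ga)\le C_2\, L(d_1,\ga)$. Taking the infimum over $\ga\in\Ga(x,y)$ then yields
\[
d_2^\ast(x,y)\le C_2\, d_1^\ast(x,y).
\]
The reverse direction is completely symmetric: applying the same argument with the inequality $d_1\le (1/C_1)\,d_2$ gives $d_1^\ast(x,y)\le (1/C_1)\, d_2^\ast(x,y)$. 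Together these show $d_1^\ast\lipequiv d_2^\ast$.

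There is no real obstacle here; the only thing to be careful about is that the two definitions of $d_i^\ast$ range over the same set of continuous paths, which is why topological equivalence of $d_1$ and $d_2$ is needed before the inequalities can be compared path by path. Everything else is a bare application of monotonicity of $\sup$ and $\inf$ under scaling by a positive constant.
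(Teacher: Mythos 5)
Your proof is correct and follows essentially the same route as the paper's: fix the biLipschitz constants, observe that topological equivalence makes $\Ga_{d_1}(x,y)=\Ga_{d_2}(x,y)$, push the pointwise inequality through the polygonal sum, the supremum over partitions, and the infimum over paths, then symmetrize. The only difference is cosmetic (you normalize the constants as $C_1 d_1\le d_2\le C_2 d_1$ rather than $d_1\le C_1 d_2$, $d_2\le C_2 d_1$), which does not change the argument.
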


\begin{proof}
The proof is a simple exercise on supremums and infimums;
we give it for the sake of completeness.
By the definition of $\lipequiv$, there exist $C_1,C_2$ in $(0,+\infty)$ such that
$d_1\le C_1 d_2$
and $d_2 \le C_2 d_1$.
Notice that $\Ga_{d_1}(x,y)=\Ga_{d_2}(x,y)$ for every $x,y$ in $X$, because $d_1\topequiv d_2$.

Let $x,y\in X$
and $\ga\in\Ga_{d_1}(x,y)$.
For every $P$ in $\cP$,
\[
S(d_2,\ga,P)
\le C_2 S(d_1,\ga,P)
\le C_2 L(d_1, \ga).
\]
Passing to the supremum over $P$ we conclude that
$L(d_2,\ga)\le C_2 L(d_1,\ga)$.
So, for every $\ga$ in $\Ga_{d_1}(x,y)$,
\[
\frac{1}{C_2} d_2^\ast(x,y)
\le
\frac{1}{C_2} L(d_2,\ga)
\le L(d_1,\ga).
\]
Passing to the infimum over $\ga$ in $\Ga_{d_1}(x,y)$
we get $d_2^\ast(x,y) \le C_2 d_1^\ast(x,y)$.
In a similar manner,
$d_1^\ast\le C_1 d_2^\ast$.
\end{proof}

\begin{example}[Euclidean distance on $\bR^n$]
\label{example:Euclidean_distance}
    We denote by $d_{\bR^n}$ the usual Euclidean distance on $\bR^n$.
    It is well known
    and easy to verify that $d_{\bR^n}^\ast = d_{\bR^n}$.   
    Identifying $\bC^n$ with $\bR^{2n}$, we obtain $d^\ast_{\bC^n} = d_{\bC^n}$.
    \hfill\qedsymbol
\end{example}

\begin{example}[the intrinsic distance induced by the finite discrete distance]
\label{example:d0}
Let $X$ be a set having more than one point and
$d_0\colon X^2\to[0,+\infty)$
is defined by
\[
d_0(x,y)
\eqdef
\begin{cases}
0, & x=y; \\
1, & x\ne y.
\end{cases}
\]
Since $d_0$ induces the discrete topology,
there are no continuous paths between different points of $X$,
and $d_0^\ast(x, y)=+\infty$ if $x\neq y$.
\hfill\qedsymbol
\end{example}

\begin{example}[square-root distance on $\bR$]
\label{example:sqrt}
Consider the distance
$\dsqrt\colon\bR^2\to[0,+\infty)$, 
given by
$\dsqrt(x, y)\eqdef\sqrt{|x-y|}$.
The topology induced by $\dsqrt$ is the usual topology in $\bR$, 
since the set of open balls with respect to the Euclidean distance on $\bR$ is the same as the set of open balls
with respect to the distance $\dsqrt$. 

Let $x,y\in\bR$. We will show that if $x\neq y$, then
$\dsqrt^\ast(x, y)=+\infty$. 
Suppose $x<y$ (the case if $x>y$ is similar).
Let $\ga\in\Ga_{\dsqrt}(x,y)$.
Notice that $\ga$ is continuous in the usual sense.
Let $m\in\bN$. For every $j$ in $\{0,\ldots, m\}$, we define
\[
z_{m,j} \eqdef x+\frac{j}{m}(y-x).
\]
By the intermediate value theorem, there is $t$ in $[0,1]$ such that $\gamma(t) = z_{m,j}$. 
Define
\[
t_{m,m}\eqdef 1,
\]
\[
t_{m,j}\eqdef
\inf\,
\bigl\{
t\in[0,1]\colon\ 
\ga(t) = z_{m,j}
\bigr\}
\qquad (j\in\{0,\ldots,m-1\}).
\]
Because $\ga$ is continuous,
$\ga(t_{m,j}) = z_{m,j}$.
Furthermore, the intermediate value theorem implies 
\[
t_{m,0}<t_{m,1}<\cdots <t_{m,m}.
\]
Using the partition 
$P_m \coloneqq\{t_{m,0}, t_{m,1}, \dots, t_{m,m}\}$,
we establish the following lower bound for $L(\dsqrt, \gamma)$:
\begin{align*}
    L(\dsqrt, \gamma)
    &\geq 
    S(\dsqrt, \gamma, P_m) 
    = 
    \sum_{j =0}^{m-1} \dsqrt(\gamma(t_{m,j}), \gamma(t_{m,j+1}))\\
    &= \sum_{j =0}^{m-1} \sqrt{z_{m,j+1}- z_{m,j}}
    = 
    \sum_{j =0}^{m-1} \sqrt{\frac{1}{m}(y-x)}
    = 
    \sqrt{m(y-x)}.
    \end{align*}
Taking the supremum over $m$ in $\bN$ shows that $L(\dsqrt, \ga) = +\infty$.
Since $\ga$ was an arbitrary path between $x$ and $y$,
it follows from Definition~\ref{def:intrinsic_distance}, that
$\dsqrt^\ast (x, y) = +\infty$ if $x\neq y$.

Let $d_0\colon\bR^2\to[0,+\infty)$
is defined as in Example~\ref{example:d0}.
Although $d_0$ and $\dsqrt$
are not topologically equivalent, 
their corresponding intrinsic distances are, in fact, equal:
$d_0^\ast = \dsqrt^\ast$.
\hfill\qedsymbol
\end{example}

In the following example we develop the idea from Example~\ref{example:sqrt}
in such a manner that the induced intrinsic distance is finite.

\begin{example}[quadrant of the plane with the usual distance added with the sqrt-angular pseudodistance]
\label{example:quadrant_angular}
Let $X$ be the closed top right quadrant
of the closed unit disk on the complex plane:
\[
X
\eqdef
\bigl\{z\in\bC\colon\ |z|\le 1,\ \Re(z)\ge 0,\ \Im(z)\ge 0\bigr\}.
\]
Given $z$ in $X$, let $\arg(z)$ be the principal argument of $z$ belonging to $[0,\pi/2]$.
We define $d\colon X^2\to[0,+\infty)$ by
\[
d(z,w)
\eqdef
\begin{cases}
|z|+|w|, & zw=0;
\\[1ex]
|z-w|, & zw\ne 0,\ \arg(z)=\arg(w);
\\[1ex]
|z-w| + \sqrt{|\arg(z)-\arg(w)|}, &
zw\ne 0,\ \arg(z)\ne\arg(w);
\end{cases}
\]
Furthermore, we define
$\rho\colon X^2\to[0,+\infty)$ by
\[
\rho(z,w)
\eqdef
\begin{cases}
|z-w|, & zw\ne 0,\ \arg(z)=\arg(w);
\\[1ex]
|z|+|w|, & \text{otherwise}.
\end{cases}
\]
Both $d$ and $\rho$ are distances on $X$.
The topology induced by $d$
coincides with the usual topology of the set $X$ induced by the Euclidean distance.
The topology induced by $\rho$ is different.
Indeed, if $z\ne0$ and $0<\de<\min\{|z|,1-|z|\}$,
then the ball $B_\rho(z,\de)$
is in fact the segment
$\{\la z\colon\ |z|-\de<\la<|z|+\de\}$.

Given $z,w$ in $X$ such that $z\ne0$, $w\ne0$, and $\arg(z)\ne\arg(w)$,
and a path $\ga\in \Ga_d(z, w)$ such that $0\notin \ga([0,1])$,
we apply the reasoning from Example~\ref{example:sqrt} to $\arg\circ\ga$
and conclude that
$L(d,\ga)=+\infty$.

Therefore, if $\eta$ is a path of finite length from $z$ to $w$, then it must pass through $0$.
Moreover, some subpaths of $\eta$
must be contained in the segments joining $z$ with $0$ and $0$ with $w$.
Figure~\ref{fig:quadrant_angular}
shows a couple of paths:
one has finite length with respect to $d$
and the other has infinite length.

It follows that $d^\ast=\rho$.
Consequently, $\rho^\ast=\rho$.
In this example,
$d$ is not topologically equivalent to $\rho$,
but they induce the same intrinsic distance.
\hfill\qedhere
\end{example}

\begin{figure}[htb]
\centering
\begin{tikzpicture}
\filldraw [lightgreen]
  (0, 0) -- (4, 0) arc (0: 90: 4cm) -- cycle;
\draw [darkgreen]
  (0, 0) -- (4, 0) arc (0: 90: 4cm) -- cycle;
\draw [blue, very thick]
  (20 : 3cm) -- (0, 0) -- (80 : 2cm);
\draw [red, very thick]
  plot [smooth] coordinates
  { (2.81907, 1.02606)
    (3, 1.5)
    (2.7, 1.8)
    (1.9, 1.6)
    (1.6, 2.1)
    (0.8, 2.2)
    (0.34730, 1.96962)
  };
\filldraw [blue]  (20 : 3cm) circle (0.6mm);
\node at (20 : 3cm) [right] {$z$};
\filldraw [blue]  (80 : 2cm) circle (0.6mm);
\node at (80 : 2cm) [above] {$w$};
\node at (4, 0) [below] {$1$};
\node at (0, 4) [left] {$\iu$};
\node at (0, 0) [below left] {$0$};
\end{tikzpicture}
\caption{The set $X$
from Example~\ref{example:quadrant_angular},
two points belonging to $X$ ($z$ and $w$),
a path of finite length between them (blue)
and a path of infinite length (red),
with respect to $d$.
\label{fig:quadrant_angular}}
\end{figure}

\begin{example}[pseudologarithmic distance on the halfline]
\label{example:pseudologarithmic_distance_on_halfline}
Let $X \eqdef (0,+\infty)$. Define $\rho\colon X^2\to[0,+\infty)$,
\[
\rho(x,y)\eqdef
\frac{2|x-y|}{x+y}.
\]
We are going to prove that
\begin{equation}
\label{eq:intrinsic_distance_for_pseudologarithmic_distance}
\rho^\ast(x,y)
=|\log(x)-\log(y)|.
\end{equation}
Consider the case where $x<y$
(the case $x>y$ is similar).

1. Let
$\xi(t)\eqdef x+t(y-x)$
be the straight-line path between $x$ and $y$, and let $Q\in\cP([0,1])$.
For each $m$ in $\bN$,
we add to $Q$ the points
$k/m$, where $k\in\{0,1,\ldots,m\}$,
and obtain a refined partition
$R_m=\{t_{m,0},t_{m,1},\ldots,t_{m,m}\}$.
For each $k$ in $\{0, 1, \ldots, m\}$, we define
$z_{m,k}\eqdef\xi(t_{m,k})$. 
This setup leads to an explicit formula for the polygonal length of
$\xi$ associated with $R_m$:
\begin{align*}
S(\rho,\xi,Q)
&\le
S(\rho,\xi,R_m)
=
\sum_{k=0}^{m-1}
\rho(\xi(t_{m,k}),\xi(t_{m,k+1}))
=
\sum_{k=0}^{m-1}
\rho(z_{m,k},z_{m,k+1})
\\[1ex]
&=
\sum_{k=0}^{m-1}
\frac{2(z_{m,k+1}-z_{m,k})}{z_{m,k+1}+z_{m,k}}
=
\sum_{k=0}^{m-1}
\frac{z_{m,k+1}-z_{m,k}}{\frac{z_{m,k+1}+z_{m,k}}{2}}.
\end{align*}
The resulting sum is a Riemannian sum for the integral of the function
$f(t) = \frac{1}{t}$ over the interval $[x, y]$.
The points $z_{m,0},z_{m,1},\ldots,z_{m,m}$ form a Riemannian partition of $[x,y]$,
and its mesh tends to $0$
as $m\to\infty$:
\[
\min_{k\in\{0,\ldots,m-1\}}
\bigl(z_{m,k+1}-z_{m,k}\bigr)
=
(y-x)\,
\min_{k\in\{0,\ldots,m-1\}}
\bigl(t_{m,k+1}-t_{m,k}\bigr)
\le
\frac{y-x}{m}.
\]
Therefore,
\[
S(\rho,\xi,Q)
\le
\lim_{m\to\infty}
S(\rho,\xi,R_m)
=\int_x^y \frac{\dif{}t}{t}
=\log(y)-\log(x).
\]
Since $Q$ was an arbitrary element of $\cP([0,1])$,
we have proven that
$L(\rho,\xi)\le \log(y)-\log(x)$.

2. For an arbitrary $\ga$ in $\Ga_{\rho}(x,y)$,
we can construct a partition
$P_m=\{t_{m,0},\ldots,t_{m,m}\}$
with $\ga(t_{m,k})=x+k(y-x)/m$,
as in Example~\ref{example:sqrt},
and obtain
\[
L(\rho,\ga)
\ge S(\rho,\ga,P_m)
= \sum_{k=0}^{m-1}
\rho\left(x+k\frac{y-x}{m},
x+(k+1)\frac{y-x}{m}\right).
\]
The right-hand side is also a Riemannian sum for the integral
$\int_x^y \dif{}t/t$.
Notice that the points
$\ga(t_{m,k})$,
where $k\in\{0,\ldots,m\}$,
form a Riemannian partition of $[x,y]$,
and the mesh of this partition equals $(y-x)/m$.
Passing to the limit as $m\to\infty$ we get
\[
L(\rho,\ga)
\ge
\int_x^y \frac{\dif{}t}{t}
=
\log(y)-\log(x).
\]
Thereby, we have proven~\eqref{eq:intrinsic_distance_for_pseudologarithmic_distance}.
\hfill\qedsymbol
\end{example}

\begin{example}[pseudologarithmic distance on a segment]
\label{example:pseudologarithmic_distance_on_segment}
Let $X \coloneqq [1, 2]$.
Define
$\rho\colon X^2\to[0,+\infty)$ as
\[
\rho(x, y) \eqdef \frac{2|x-y|}{x+y}.
\]
In other words, we consider a metric subspace of the metric space from Example~\ref{example:pseudologarithmic_distance_on_halfline}.
It is easy to see that
$\rho^\ast(x,y)
=|\log(x)-\log(y)|$.
Let $d_{[1,2]}$ be the Euclidean distance on $[1,2]$.
It is easy to see that
$\rho\lipequiv d_{[1,2]}$,
but $\rho^\ast\ne d_{[1,2]}=d_{[1,2]}^\ast$.
Moreover, there is no $C>0$ such that $\rho^\ast=C d_{[1,2]}^\ast$.
\hfill\qedsymbol
\end{example}

\begin{example}[pseudohyperbolic distance on the halfplane]
\label{example:pseudohyperbolic_halfplane}
Let $\bH$ be the upper halfplane of the complex plane:
\[
\bH\eqdef\{z\in\bC\colon\ \Im(z)>0\}.
\]
Define $\rho_\bH\colon \bH^2\to [0,+\infty)$,
\begin{equation}\label{eq:def_pseudohyperbolic_distance_on_halfplane}
\rho_\bH(z, w)
\eqdef 
\left|\frac{w-z}{w-\overline{z}}\right|. 
\end{equation}
For numbers on the imaginary axis, $\rho_\bH$ is the distance from Example~\ref{example:pseudologarithmic_distance_on_halfline}, divided by 2:
\[
\rho_{\bH}(ia,ib)
=\frac{|a-b|}{a+b}
\qquad(a,b>0).
\]
The distance $\rho_{\bH}$ between two points is always greater or equal to the distance $\rho_{\bH}$ between their projections to the imaginary axis:
\begin{equation}
\label{eq:hyp_dist_and_projections}
\rho_{\bH}(z,w)
\ge
\rho_\bH(i\, \Im(z), i\, \Im(w)).
\end{equation}
Indeed, for $z=x+iy$ and $w=u+iv$,
\begin{align*}
&\rho_\bH(z, w)^2 -\rho_\bH(i\, \Im(z), i\, \Im(w))^2 
=
\frac{(x-u)^2+(y-v)^2}{(x-u)^2+(y+v)^2} - \frac{(y-v)^2}{(y+v)^2}
\\[1ex]
&\quad=
\frac{(x-u)^2(y+v)^2-(x-u)^2(y-v)^2}{((x-u)^2+(y+v)^2)(y+v)^2}
=
\frac{4vy(x-u)^2}{((x-u)^2+(y+v)^2)(y+v)^2}
\geq 0.
\end{align*}
Using~\eqref{eq:hyp_dist_and_projections} it is easy to see that the straight path between $ia$ and $ib$ is shorter than any other path. Therefore, by~\eqref{eq:intrinsic_distance_for_pseudologarithmic_distance},
\[
\rho_\bH^\ast(ia, ib) 
= 
\frac{1}{2}|\log(a)-\log(b)| 
\qquad
(a,b >0).
\]
Given arbitrary $z,w$ in $\bH$, it is possible to find a M\"{o}bius transform $\phi$ of $\bH$ and some numbers $a,b>0$ such that $\phi(z)=ia$, $\phi(w)=ib$;
see, e.g., \cite[Section~3.5]{anderson_hyperbolic_geometry}.
Since $\rho_\bH$ is invariant under the M\"{o}bius transforms of $\bH$,
$\rho_{\bH}^\ast(z,w)=\rho_{\bH}^\ast(ia,ib)$.
After an explicit computation of
$\phi$, $a$, and $b$,
one obtains
\begin{equation}
\label{eq:hyperbolic_distance_halfplane}
\rho_{\bH}^\ast(z,w)
=
\frac{1}{2}
\log\frac{1+\rho_{\bH}(z,w)}{1-\rho_{\bH}(z,w)}
=\frac{1}{2}
\log\frac{|w-\overline{z}|+|w-z|}{|w-\overline{z}|-|w-z|}.
\end{equation}
\hfill\qedsymbol
\end{example}

\begin{example}[pseudohyperbolic distance on the unit disk]
\label{example:pseudohyperbolic_disk}
Let $\bD\eqdef\{z\in\bC\colon\ |z|<1\}$.
Define $\rho_\bD\colon \bD^2\to [0,+\infty)$,
\begin{equation}\label{eq:def_pseudohyperbolic_distance_on_disk}
\rho_\bD(z, w)
\eqdef 
\left|\frac{w-z}{1-w\overline{z}}\right|. 
\end{equation}
A Cayley transform between $\bH$ and $\bD$ converts $\rho_\bH$ into $\rho_\bD$.
Thereby, it is possible to show that
\begin{equation}
\label{eq:hyperbolic_distance_disk}
\rho_\bD^\ast(z,w)
=\frac{1}{2}
\log\frac{1+\rho_{\bD}(z,w)}{1-\rho_{\bD}(z,w)}
=\frac{1}{2}
\log\frac{|1-w\overline{z}|+|w-z|}{|1-w\overline{z}|-|w-z|}.
\end{equation}
\hfill\qedsymbol
\end{example}

It is well known
(see, e.g., \cite[Chapter~33]{Voight2021}
and \cite[Section~1.5]{Zhu2005})
that $2\rho_{\bH}^\ast$
is the geodesic distance associated with the Poincar\'{e} halfplane metric
$\dif{}x\,\dif{}y/y^2$,
and $2\rho_{\bD}^\ast$
is the geodesic distance associated with the Poincar\'{e} disc metric
$\dif{}x\,\dif{}y/(1-x^2-y^2)^2$.
Nevertheless, we emphasize that the intrinsic distances
$\rho_{\bH}^\ast$ and $\rho_{\bD}^\ast$
in these classical examples
can be computed without using Riemannian metrics.

\section{Locally similar distances induce the same intrinsic distance}
\label{sec:main_result}

In this section, we prove the main result of the paper (Theorem~\ref{thm:main}).

We start with the following elementary lemma
which provides a sufficient condition for 
one interval to be a subset of another.
Figure~\ref{fig:interval_inclusion} illustrates such intervals.
Lemma~\ref{lem:interval_inclusion}
is useful in the proof of Lemma~\ref{lem:finite_cover}.

\begin{lem}
\label{lem:interval_inclusion}
Let $x,y\in\bR$ and $r,s>0$ such that one of the following conditions holds:
\begin{enumerate}
\item[(a)] $y<x$ and $x+r<y+s$;
\item[(b)] $x<y$ and $y-s<x-r$.
\end{enumerate}
Then, $(x-r,x+r)\subseteq (y-s,y+s)$.
\end{lem}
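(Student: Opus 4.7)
The plan is to reduce the inclusion $(x-r,x+r)\subseteq (y-s,y+s)$ to the conjunction of the two endpoint inequalities $y-s<x-r$ and $x+r<y+s$, and then, in each case, derive whichever of these two inequalities is not stated among the hypotheses. Since in case~(a) we are given the right endpoint inequality, and in case~(b) the left one, the entire proof amounts to extracting the missing inequality from the hypotheses.

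For case~(a), I would first notice that from $x+r<y+s$ and $y<x$ one obtains $r<s$: rearranging the first inequality gives $r<(y-x)+s$, and since $y-x<0$, this forces $r<s$. Combining $y-x<0$ with $s-r>0$ then yields $y-x<s-r$, which is exactly the missing left endpoint inequality $y-s<x-r$. Both endpoint inequalities together imply the desired inclusion.

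Case~(b) is symmetric: from $y-s<x-r$ and $x<y$ one extracts $r<s$ in the same way (rearranging gives $y-x<s-r$, and the left-hand side is positive), and then $x-y<0<s-r$ yields the missing right endpoint inequality $x+r<y+s$.

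The statement is really just algebra on real numbers, so there is no serious obstacle; the only point worth isolating is the small intermediate lemma $r<s$, which is what makes the interval of radius $s$ actually wide enough to contain the other. I expect the written proof to be only a few lines, perhaps with a picture reference to Figure~\ref{fig:interval_inclusion} for geometric intuition.
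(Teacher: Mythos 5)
Your proposal is correct and follows essentially the same route as the paper's proof: the paper also first derives $s>r$ from the hypotheses of case~(a) (via $s>(x-y)+r>r$) and then concludes with the left-endpoint inequality, treating case~(b) by symmetry.
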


\begin{proof}
We only consider the case (a).
First, we compare the radii of the intervals:
\[
s > (x - y) + r > r.
\]
Then, we make a conclusion
about the left extremes:
$y - s < x - r$.
\end{proof}

\begin{figure}[htb]
\centering
\begin{tikzpicture}
\fill [fill=lightblue]
  (-1.5, 0) rectangle (3.5, 0.2);
\fill [fill=lightgreen]
  (-4, -0.2) rectangle (4, 0);
\draw (-5, 0) -- (5, 0);
\draw (-4, -0.2) -- (-4, 0);
\node at (-4, -0.2) [below]
  {$\scriptstyle y-s$};
\draw (4, -0.2) -- (4, 0);
\node at (4, -0.2) [below]
  {$\scriptstyle y+s$};
\draw (0, -0.2) -- (0, 0);
\node at (0, -0.2) [below]
  {$\scriptstyle y$};
\draw (-1.5, 0) -- (-1.5, 0.2);
\node at (-1.5, 0.2) [above]
  {$\scriptstyle x-r$};
\draw (3.5, 0) -- (3.5, 0.2);
\node at (3.5, 0.2) [above]
  {$\scriptstyle x+r$};
\draw (1, 0) -- (1, 0.2);
\node at (1, 0.2) [above]
  {$\scriptstyle x$};
\end{tikzpicture}
\\[1ex]
\begin{tikzpicture}
\fill [fill=lightblue]
  (-3.5, 0) rectangle (-0.5, 0.2);
\fill [fill=lightgreen]
  (-4, -0.2) rectangle (4, 0);
\draw (-5, 0) -- (5, 0);
\draw (-4, -0.2) -- (-4, 0);
\node at (-4, -0.2) [below]
  {$\scriptstyle y-s$};
\draw (4, -0.2) -- (4, 0);
\node at (4, -0.2) [below]
  {$\scriptstyle y+s$};
\draw (0, -0.2) -- (0, 0);
\node at (0, -0.2) [below]
  {$\scriptstyle y$};
\draw (-2, 0) -- (-2, 0.2);
\node at (-2, 0.2) [above]
  {$\scriptstyle x$};
  \node at (-3.5, 0.2) [above]
  {$\scriptstyle x-r$};
\draw (-0.5, 0.2) -- (-0.5, 0);
\node at (-0.5, 0.2) [above]
  {$\scriptstyle x+r$};
\draw (-3.5, 0.2) -- (-3.5, 0);
\end{tikzpicture}
\caption{Lemma~\ref{lem:interval_inclusion},
case (a) (above)
and (b) (below).
\label{fig:interval_inclusion}}
\end{figure}
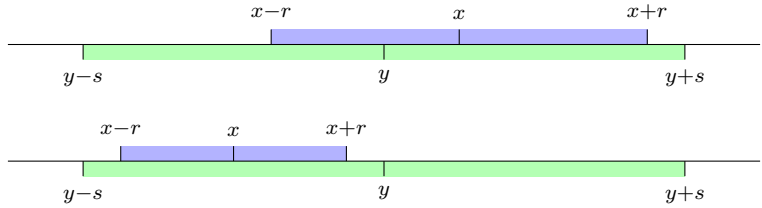

Next, we need the following fact about open covers
and Riemannian partitions of $[a,b]$.

\begin{lem}
\label{lem:finite_cover}
Let $a,b\in\bR$, $a<b$,
and let $R\colon[a,b]\to(0,+\infty)$.
Then, there exist $m$ in $\bN$
and $t_0,\ldots,t_m$ in $[0,1]$
such that
$a=t_0<t_1<\cdots<t_m=b$
and
\begin{equation}
\label{eq:difference_t_less_max_R}
t_k-t_{k-1}
<\max\{R(t_{k-1}), R(t_k)\}
\qquad(1\le k\le m).
\end{equation}
\end{lem}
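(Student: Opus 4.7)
The plan is to carry out a shooting argument along $[a,b]$, extending a compliant partition step by step. Define
\[
A \eqdef \{t\in(a,b]\colon\ \exists\,m\geq 1 \text{ and } a = t_0 < t_1 < \cdots < t_m = t \text{ satisfying~\eqref{eq:difference_t_less_max_R}}\},
\]
and aim to show $b\in A$ by analyzing $t^*\eqdef\sup A$.

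First I would check that $A$ is non-empty: for any $t_1\in(a,a+R(a))\cap(a,b]$, which is non-empty because $R(a)>0$, the two-point partition $\{a,t_1\}$ witnesses $t_1\in A$, since $t_1-a<R(a)\leq\max\{R(a),R(t_1)\}$. The key intermediate observation is an \emph{extension property}: if $s\in A$ with $s<b$, then every $s'\in(s,s+R(s))\cap(s,b]$ also lies in $A$. Indeed, one simply appends $s'$ to a witnessing partition for $s$, and the new gap $s'-s<R(s)\leq\max\{R(s),R(s')\}$ preserves~\eqref{eq:difference_t_less_max_R}.

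With $t^*\eqdef\sup A\in(a,b]$, the next step is to argue that $t^*\in A$. By the definition of supremum applied to the strictly positive quantity $R(t^*)$, there exists $t'\in A$ with $t^*-R(t^*)<t'\leq t^*$. If $t'=t^*$, we are done. If $t'<t^*$, I append $t^*$ to a witnessing partition for $t'$: the new last gap satisfies $t^*-t'<R(t^*)\leq\max\{R(t'),R(t^*)\}$, so the extended partition witnesses $t^*\in A$. Finally, if one had $t^*<b$, the extension property would produce some $s'\in A$ with $s'>t^*$, contradicting that $t^*$ is an upper bound. Hence $t^*=b$, and the partition witnessing $b\in A$ is the desired one.

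The argument nowhere requires continuity, monotonicity, boundedness, or compactness on $R$; only pointwise positivity is used, so I do not anticipate a substantial obstacle beyond setting up the supremum carefully. I note that Lemma~\ref{lem:interval_inclusion} is not needed in this approach; it would naturally arise in a compactness-based alternative (extracting a minimal finite subcover of $[a,b]$ from $\{(t-R(t),t+R(t))\colon t\in[a,b]\}$ and pruning intervals contained in others via Lemma~\ref{lem:interval_inclusion}), but the shooting argument sidesteps that machinery entirely.
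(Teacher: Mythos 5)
Your proof is correct, and it takes a genuinely different route from the paper's. The paper invokes the Heine--Borel theorem to extract a finite subcover of $[a,b]$ by intervals $(x-R(x)/2,\,x+R(x)/2)$, passes to a cover of minimal cardinality, and then argues by contradiction (via the interval-inclusion Lemma~\ref{lem:interval_inclusion}) that any two consecutive intervals in that minimal cover must overlap, which yields~\eqref{eq:difference_t_less_max_R}. You instead run a ``creeping along the line'' (real induction) argument: letting $A$ be the set of right endpoints reachable by a compliant partition, you establish non-emptiness, an extension property, membership of $\sup A$ in $A$, and finally $\sup A=b$. Each of these four steps is correctly justified; in particular the step that $t^*\in A$ uses the supremum with $\eps=R(t^*)$ exactly as needed, and the contradiction at $t^*<b$ uses the extension property correctly. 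Your approach is shorter and more elementary: it needs only the least upper bound property of $\bR$, it never uses compactness, and, as you observe, it renders Lemma~\ref{lem:interval_inclusion} unnecessary. It also shares with the paper's argument the virtue noted in Remark~\ref{rem:axiom_of_choice} of avoiding the axiom of choice, since every selection you make is either canonical or from a finite set. What the compactness-based proof buys, beyond fitting the paper's expository emphasis on Heine--Borel, is a template that generalizes more readily to covering arguments in higher-dimensional or less ordered settings; the shooting argument, by contrast, is intrinsically one-dimensional because it leans on the linear order of $[a,b]$.
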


\begin{proof}
Let $\cQ$ be the subset of $\cP([a,b])$
such that the corresponding $R(x)/2$-neighborhoods of its elements cover $[a,b]$:
\[
\cQ
\eqdef
\left\{P\in\cP([a,b])\colon\quad
[a,b]\subseteq
\bigcup_{x\in P}
\left(x-\frac{R(x)}{2},x+\frac{R(x)}{2}\right)
\right\}.
\]
By the Heine--Borel theorem,
$\cQ\ne\emptyset$.
Assigning to each set $P$ belonging to $\cQ$
its cardinality $\# P$,
we get a nonempty set 
$\{\#P\colon\ P\in\cQ\}$
of natural numbers.
By the induction principle,
it has a minimal element.
We denote it by $m+1$:
\[
m \eqdef \min\bigl(\{\# P\colon\ P\in\cQ\}\bigr) - 1.
\]
Then, we choose in $\cQ$
an element $Q$ of minimal size $m+1$.

Let $Q=\{t_0,t_1,\ldots,t_m\}$,
where $a=t_0<t_1<\ldots<t_m=b$.
For brevity, we set $r_k\eqdef R(t_k)/2$ for each $k$.
We claim that for every $k$ in $\{1,\ldots,m\}$,
\begin{equation}
\label{eq:t_plus_r_greater_t_minus_r}
t_{k-1}+r_{k-1}>t_k-r_k.
\end{equation}
In other words, we consider the list of intervals
$((t_k-r_k,t_k+r_k))_{k=0}^m$,
and we are going to prove that
each pair of consecutive intervals from this list intersect.

Reasoning by reductio ad absurdum
we suppose that 
$k$ is an element of $\{1,\ldots,m\}$
such that
\begin{equation}
\label{eq:wrong_inequality_between_the_neighbors}
t_{k-1}+r_{k-1}\le t_k-r_k.
\end{equation}
Since $t_k-r_k\in[a,b]$, the following inequalities hold:
\[
a \le t_{k-1}
< t_{k-1}+r_{k-1}
\le t_k-r_k
< t_k
\le b.
\]
Since $Q\in\cQ$,
the point $t_k-r_k$ must be covered
by some of the intervals
$\left(x-\frac{R(x)}{2},x+\frac{R(x)}{2}\right)$, where $x\in Q$.
We choose $j$ in $\{0,1,\ldots,m\}$
such that
\begin{equation}
\label{eq:pointk_belongs_intervalj}
t_j-r_j < t_k-r_k < t_j+r_j.
\end{equation}
Obviously, $j\ne k$.
Moreover, due to~\eqref{eq:wrong_inequality_between_the_neighbors}, $j\ne k-1$.

First, consider the case where $j>k$.
Then, $t_j>t_k$ and $t_j-r_j<t_k-r_k$.
By Lemma~\ref{lem:interval_inclusion},
\[
(t_k-r_k,t_k+r_k)
\subseteq
(t_j-r_j,t_j+r_j).
\]
Notice that $k\in\{1,\ldots,m-1\}$
because $1\le k<j\le m$.
Therefore, we can exclude $\{t_k\}$
from $Q$ without losing the covering property
nor the property that $a,b\in Q$.
So, $Q\setminus \{t_k\}$
belongs to $\cQ$ and has fewer elements than $Q$,
which contradicts the minimality of $Q$.

Now, consider the case where $j<k-1$.
In this case, $t_j < t_{k-1}$.
On the other hand,
by~\eqref{eq:pointk_belongs_intervalj}
and~\eqref{eq:wrong_inequality_between_the_neighbors},
\[
t_{k-1}+r_{k-1}
\le t_k - r_k
< t_j + r_j,
\]
By Lemma~\ref{lem:interval_inclusion},
\[
(t_{k-1}-r_{k-1},t_{k-1}+r_{k-1})
\subseteq
(t_j-r_j,t_j+r_j).
\]
Notice that $k-1\in\{1,\ldots,m-1\}$
because $0\le j < k-1 < k\le m$.
Excluding $t_{k-1}$ from $Q$
we get an element of $\cQ$
with a smaller number of elements,
which again contradicts the minimality of $Q$.

Therefore,~\eqref{eq:t_plus_r_greater_t_minus_r} is true,
and \eqref{eq:t_plus_r_greater_t_minus_r} easily implies~\eqref{eq:difference_t_less_max_R}:
\[
t_k - t_{k-1}
< r_{k-1}+r_k
= \frac{R(t_{k-1})+R(t_k)}{2}
\le \max\{R(t_{k-1}), R(t_k)\}.
\qedhere
\]
\end{proof}

\begin{rem}
\label{rem:one_point_is_center_for_the_other}
The property~\eqref{eq:difference_t_less_max_R} 
from Lemma~\ref{lem:finite_cover}
has a simple geometrical meaning:
for each $k$,
at least one of the points $t_{k-1}$ and $t_k$ belongs to the interval associated with the other point.
In other words,
for each $k$ in $\{1,\ldots,m\}$,
at least one of the following conditions is fulfilled:
\begin{itemize}
\item[(a)] $t_k\in(t_{k-1}-R(t_{k-1}),t_{k-1}+R(t_{k-1}))$;
\item[(b)] $t_{k-1}\in (t_k-R(t_k),t_k+R(t_k))$.
\end{itemize}
Indeed, if
$\max\{R(t_{k-1}),R(t_k)\}=R(t_{k-1})$,
then condition (a) holds.
Otherwise,
we get
$\max\{R(t_{k-1}),R(t_k)\}=R(t_k)$,
and (b) holds.
\end{rem}

\begin{rem}
\label{rem:axiom_of_choice}
The axiom of choice is unnecessary in this paper, and we avoid using it.
It is well known that the Heine--Borel theorem can be proved without using the axiom of choice;
see, e.g., Lebesgue's proof
explained in~\cite[Chapter~2, Section~5, Theorem~15]{Royden_1988}.
In the proof of the forthcoming
Lemma~\ref{lem:principal},
to define a function $R\colon[a,b]\to(0,+\infty)$ without using the axiom of choice,
we select $R(x)$ as the maximal number of the form $1/q$, where $q\in\bN$,
that has the needed property.
\end{rem}

\begin{lem}
\label{lem:principal}
Let $X$ be a set and
$d_1,d_2$ be distances on $X$ such that 
$d_1\cong d_2$.
Let $a,b\in\bR$, $a<b$,
and $\eta\in C_{d_1}([a,b],X)$.
Then, for every $\eps>0$,
there exist $m$ in $\bN$
and $t_0,t_1,\ldots,t_m$ in $[a,b]$
such that $a=t_0<t_1<\cdots<t_m=b$ and
\begin{equation}
\label{eq:sum_inequality}
\sum_{k=1}^m d_2(\eta(t_{k-1}),\eta(t_k))
\le (1+\eps)
\sum_{k=1}^m d_1(\eta(t_{k-1}),\eta(t_k)).
\end{equation}
\end{lem}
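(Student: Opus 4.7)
The plan is to localize the hypothesis $d_1\cong d_2$ along the path $\eta$, and then patch the local estimates together via Lemma~\ref{lem:finite_cover}.

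For every $s\in[a,b]$, I would first apply Proposition~\ref{prop:local_similarity_another_form} at the point $\eta(s)\in X$: since $d_1\cong d_2$, there exists $\la(s)>0$ such that
\[
\forall x\in B_{d_1}(\eta(s),\la(s))\setminus\{\eta(s)\}
\qquad
d_2(x,\eta(s))\le(1+\eps)\,d_1(x,\eta(s)).
\]
Using the continuity of $\eta\colon[a,b]\to(X,\tau_{d_1})$ at $s$, one can pick a real $R(s)>0$ such that $\eta(t)\in B_{d_1}(\eta(s),\la(s))$ for every $t\in[a,b]$ with $|t-s|<R(s)$. To define $R$ without invoking the axiom of choice, I would take $R(s)$ to be the largest number of the form $1/q$ with $q\in\bN$ for which the implication above holds; cf.\ Remark~\ref{rem:axiom_of_choice}.

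Next I would apply Lemma~\ref{lem:finite_cover} to the function $R\colon[a,b]\to(0,+\infty)$, obtaining $m\in\bN$ and a partition $a=t_0<t_1<\cdots<t_m=b$ satisfying~\eqref{eq:difference_t_less_max_R}. By Remark~\ref{rem:one_point_is_center_for_the_other}, for each $k\in\{1,\ldots,m\}$ one of the following two conditions holds: either $|t_k-t_{k-1}|<R(t_{k-1})$, or $|t_{k-1}-t_k|<R(t_k)$. In the first case, by the construction of $R(t_{k-1})$, the point $\eta(t_k)$ lies in $B_{d_1}(\eta(t_{k-1}),\la(t_{k-1}))$, so setting $x=\eta(t_k)$ in the local bound at $s=t_{k-1}$ (the inequality is trivial when $\eta(t_{k-1})=\eta(t_k)$) gives
\[
d_2(\eta(t_{k-1}),\eta(t_k))\le(1+\eps)\,d_1(\eta(t_{k-1}),\eta(t_k));
\]
the second case is symmetric. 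Summing these $m$ inequalities over $k=1,\ldots,m$ yields~\eqref{eq:sum_inequality}.

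The main obstacle is that the similarity estimate guaranteed by Definition~\ref{def:locally_similar_distances} is one-sided: it controls $d_2(x,a)/d_1(x,a)$ only when one of the two arguments is a fixed center $a$ and the other is close to it, and not for arbitrary pairs $(x,y)$ near $a$; indeed, Example~\ref{example:hook} shows that such pairwise control can genuinely fail. The role of Lemma~\ref{lem:finite_cover} together with Remark~\ref{rem:one_point_is_center_for_the_other} is precisely to build a partition in which each consecutive pair $(t_{k-1},t_k)$ admits such a center, so that the pairwise estimate reduces to the available center-based one.
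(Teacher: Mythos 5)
Your proposal is correct and follows essentially the same route as the paper: extract a one-sided local estimate at each $\eta(t)$ from $d_1\cong d_2$, transfer it via continuity to a radius function $R\colon[a,b]\to(0,+\infty)$ chosen as the largest $1/q$ to sidestep the axiom of choice, invoke Lemma~\ref{lem:finite_cover} together with Remark~\ref{rem:one_point_is_center_for_the_other} to ensure that each consecutive pair of partition points admits one of the two as a ``center'', and sum. The only cosmetic difference is that you route the local estimate through Proposition~\ref{prop:local_similarity_another_form} while the paper reads it off directly from Definition~\ref{def:locally_similar_distances}; your closing paragraph correctly identifies the one-sidedness of the estimate (cf.\ Example~\ref{example:hook}) as the reason the ``one point is a center'' mechanism is needed.
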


\begin{proof}
For each $t$ in $[a,b]$,
by the definition of $\cong$,
there exists an open neighborhood $W$ of $\eta(t)$ such that
\[
\forall u\in W\qquad
d_2(u,\eta(t)) < (1+\eps) d_1(u,\eta(t)).
\]
Furthermore, by the continuity of $\eta$,
there exists $r>0$ such that if $s\in[a,b]$ and $|s-t|<r$, then
$\eta(s)\in W$.
Moreover, by the Archimedes--Eudoxus principle,
there exists $q$ in $\bN$ such that $1/q<r$.
Thus, for each $t$ in $[a,b]$,
the following set is nonempty:
\[
D(t) \eqdef
\Bigl\{q\in\bN\colon\
\forall s\in[a,b]\cap(t-1/q,t+1/q)\quad
d_2(\eta(s),\eta(t))
<(1+\eps)d_1(\eta(s),\eta(t))\Bigr\}.
\]
By the induction principle,
$D(t)$ has a minimum element.
We define $R\colon[a,b]\to(0,+\infty)$ by
\[
R(t) \eqdef \frac{1}{\min(D(t))}.
\]
The definition of $R(t)$ implies that for each $t$ in $[a,b]$.
\begin{equation}
\label{eq:R_property}
\forall
s\in[a,b]\cap
(t-R(t),t+R(t))
\qquad
d_2(\eta(s),\eta(t))
<(1+\eps)d_1(\eta(s),\eta(t)).
\end{equation}
Applying Lemma~\ref{lem:finite_cover},
we find $m$ in $\bN$
and $t_0,\ldots,t_m$ in $[a,b]$
satisfying~\eqref{eq:difference_t_less_max_R}.

For each $k$ in $\{1,\ldots,m\}$,
due to~\eqref{eq:difference_t_less_max_R},
we have at least one of the cases (a) or (b)
from Remark~\ref{rem:one_point_is_center_for_the_other}.

In the case (a),
we use~\eqref{eq:R_property} with $t_{k-1}$ instead of $t$ and $t_k$ instead of $s$, and we get
\begin{equation}
\label{eq:neighbors_d2_leq_onepluseps_d1}
d_2(\eta(t_{k-1}),\eta(t_k))
< (1+\eps) d_1(\eta(t_{k-1}),\eta(t_k)).
\end{equation}
In the case (b),
we apply~\eqref{eq:R_property} with $t_k$ instead of $t$ and $t_{k-1}$ instead of $s$, and we also get~\eqref{eq:neighbors_d2_leq_onepluseps_d1}.

Finally, summing~\eqref{eq:neighbors_d2_leq_onepluseps_d1} over all $k$ in $\{1,\ldots,m\}$,
we obtain~\eqref{eq:sum_inequality}.
\end{proof}

\begin{thm}
\label{thm:main}
If $X$ is a set and
$d_1,d_2\colon X\to[0,+\infty)$
are distances on $X$ such that
$d_1\cong d_2$,
then $d_1^\ast=d_2^\ast$.
\end{thm}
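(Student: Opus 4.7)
My plan is to reduce both inequalities $d_1^\ast \le d_2^\ast$ and $d_2^\ast \le d_1^\ast$ to one by exploiting the symmetry of the relation $\cong$. Since $\cong$ is an equivalence relation, it suffices to prove, say, $d_2^\ast(x,y)\le d_1^\ast(x,y)$, because the same argument with $d_1$ and $d_2$ interchanged gives the reverse inequality. Local similarity forces topological equivalence, so the path sets match: $\Ga_{d_1}(x,y)=\Ga_{d_2}(x,y)$ for every $x,y\in X$. Thus the infima in Definition~\ref{def:intrinsic_distance} are taken over the same family, and the main task reduces to showing that, for every path $\ga\in\Ga_{d_1}(x,y)$, one has $L(d_2,\ga)\le L(d_1,\ga)$.

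To bound $L(d_2,\ga)$ from above by $L(d_1,\ga)$, I would fix $\eps>0$ and an arbitrary partition $P'=\{s_0,s_1,\ldots,s_n\}\in\cP$, and then upgrade $P'$ to a refinement $Q\supseteq P'$ on which the $d_2$-polygonal sum is controlled by $(1+\eps)$ times the corresponding $d_1$-polygonal sum. The refinement $Q$ is built by applying Lemma~\ref{lem:principal} on each subinterval $[s_{k-1},s_k]$ to the restricted path $\ga|_{[s_{k-1},s_k]}$ (using Remark~\ref{rem:general_intervals_instead_of_unit_interval}), obtaining partitions $s_{k-1}=t_{k,0}<\cdots<t_{k,m_k}=s_k$ whose $d_2$-polygonal sums are at most $(1+\eps)$ times their $d_1$-polygonal sums, and letting $Q$ be the union. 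Summing the inequalities over $k$ gives $S(d_2,\ga,Q)\le(1+\eps)\,S(d_1,\ga,Q)\le(1+\eps)\,L(d_1,\ga)$. Then, by the monotonicity result Lemma~\ref{lem:polygonal_partition_is_increasing_wrt_contention}, $S(d_2,\ga,P')\le S(d_2,\ga,Q)$, so $S(d_2,\ga,P')\le(1+\eps)\,L(d_1,\ga)$.

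Taking the supremum over $P'\in\cP$ yields $L(d_2,\ga)\le(1+\eps)\,L(d_1,\ga)$, and sending $\eps\to 0^+$ gives $L(d_2,\ga)\le L(d_1,\ga)$. Taking the infimum over $\ga\in\Ga_{d_1}(x,y)=\Ga_{d_2}(x,y)$ delivers $d_2^\ast(x,y)\le d_1^\ast(x,y)$, and the symmetric argument finishes the proof. The only genuinely non-obvious step is the refinement trick: a naive application of Lemma~\ref{lem:principal} to $[0,1]$ produces one specific partition rather than an arbitrary refinement of a pre-assigned $P'$, and the sup defining $L(d_2,\ga)$ forces us to control $S(d_2,\ga,\cdot)$ on every partition. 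Applying the lemma piecewise on the subintervals of $P'$ and invoking monotonicity of polygonal sums under refinement is the crucial maneuver; everything else is routine manipulation of sums, suprema, and infima.
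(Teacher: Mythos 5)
Your proposal is correct and matches the paper's own proof essentially step for step: both fix $\eps>0$ and an arbitrary partition $P$, apply Lemma~\ref{lem:principal} to each restricted subpath $\ga|_{[p_{j-1},p_j]}$ to build a refinement $Q\supseteq P$ with $S(d_2,\ga,Q)\le(1+\eps)S(d_1,\ga,Q)$, then use monotonicity under refinement (Lemma~\ref{lem:polygonal_partition_is_increasing_wrt_contention}) and pass to the sup over $P$, to $\eps\to 0^+$, and to the inf over $\ga$, finishing by the symmetry of $\cong$. The only cosmetic difference is that you make the $\eps\to 0^+$ step explicit where the paper leaves it implicit.
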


\begin{proof}
We suppose that $d_1,d_2\colon X\to[0,+\infty)$ such that $d_1\cong d_2$.
Let $x, y\in X$.
Since $d_1$ and $d_2$
are topologically equivalent,
$\Ga_{d_1}(x, y)
= \Ga_{d_2}(x, y)$.
Let $\ga\in \Ga_{d_1}(x, y)$ and $\eps>0$.
Given a partition $P$ in $\cP$,
we enumerate its elements in the ascending order:
$P=\{p_0,p_1,\ldots,p_N\}$,
$0=p_0<p_1<\cdots<p_N=1$.
Then, for each $j$ in $\{1,\ldots,N\}$,
we apply Lemma~\ref{lem:principal}
to the subpath
$\eta_j\eqdef\ga|_{[p_{j-1},p_j]}$.
Thereby, we get $m_j$ in $\bN$
and a list of real numbers
$t_{j,0},t_{j,1},\dots,t_{j,m_j}$ such that
\[
p_{j-1}
= t_{j,0}
< t_{j,1}
< \cdots
< t_{j,m_j}
= p_j
\]
and
\begin{equation}
\label{eq:sum_d2_ga_leq_sum_d1_ga}
\sum_{k=1}^{m_j}
d_2(\ga(t_{j,k-1}),\ga(t_{j,k}))
\le (1+\eps)
\sum_{k=1}^{m_j}
d_1(\ga(t_{j,k-1}),\ga(t_{j,k})).
\end{equation}
Let $Q$ be the partition consisting of all these points $t_{j,k}$,
where 
$1\le j\le N$ and $1\le k\le m_j$.
Then, $P\subseteq Q$ and $Q\in\cP$.
Summing~\eqref{eq:sum_d2_ga_leq_sum_d1_ga}
over $j$ in $\{1,\ldots,N\}$ we obtain
\[
S(\ga,Q,d_2)\le (1+\eps) S(\ga,Q,d_1).
\]
Therefore,
\[
S(\ga,P,d_2)
\le S(\ga, Q, d_2)
\le (1+\eps) S(\ga,Q,d_1)
\le (1+\eps) L(d_1,\ga).
\]
We conclude that
$L(d_2,\ga)\le L(d_1,\ga)$.
Taking the infimum over $\ga$
in $\Ga_{d_1}(x,y)$,
we get
$d_2^\ast(x,y)\le d_1^\ast(x,y)$.
The inequality
$d_1^\ast(x,y)\le d_2^\ast(x,y)$
follows with a similar reasoning,
swapping the roles of $d_1$ and $d_2$ in Lemma~\ref{lem:principal}.
\end{proof}

\begin{cor}
\label{cor:d1_locsim_d2_eq_d2intr}
If $X$ is a set and $d_1,d_2$ are distances on $X$ such that $d_1\cong d_2$ and $d_2^\ast=d_2$,
then $d_1^\ast=d_2$.
\end{cor}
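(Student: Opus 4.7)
The plan is to derive this as an immediate consequence of Theorem~\ref{thm:main}, which is the substantive result of the section. The hypothesis $d_1\cong d_2$ is exactly the hypothesis of Theorem~\ref{thm:main}, so applying it yields $d_1^\ast = d_2^\ast$. Combining this with the assumed identity $d_2^\ast = d_2$ gives $d_1^\ast = d_2$, as claimed.

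Since the proof is a two-step chain of equalities, there is no real obstacle: the work has already been done in Theorem~\ref{thm:main}, and the corollary merely repackages it in a form that is useful when one of the two distances is already known to be intrinsic (as happens, for instance, for the Euclidean distance on $\bR^n$ in Example~\ref{example:Euclidean_distance}, or the hyperbolic distances in Examples~\ref{example:pseudohyperbolic_halfplane} and~\ref{example:pseudohyperbolic_disk}). The only thing worth writing is the two-line chain
\[
d_1^\ast \;=\; d_2^\ast \;=\; d_2,
\]
where the first equality is Theorem~\ref{thm:main} and the second is the hypothesis.
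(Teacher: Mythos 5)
Your proof is correct and is exactly the argument the paper intends: the corollary is stated immediately after Theorem~\ref{thm:main} with no separate proof, precisely because it follows by the two-line chain $d_1^\ast = d_2^\ast = d_2$, the first equality being Theorem~\ref{thm:main} and the second the hypothesis.
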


In Examples~\ref{example:pseudologarithmic_distance_on_halfline} and~\ref{example:pseudohyperbolic_disk},
the distances have the property $d^\ast\cong d$.

\begin{problem}
Find a necessary and sufficient condition for $d^\ast\cong d$.
\end{problem}

Notice that $d_1\cong d_2$ is not
a necessary condition for
$d_1^\ast=d_2^\ast$.
We have seen it in Examples~\ref{example:sqrt}
and~\ref{example:quadrant_angular}.
In that examples, the initial distances where not topologically equivalent.

In the following Example~\ref{example:comb},
two distances $d$ and $\rho$
induce the same topology
and the same finite intrinsic distance
$d^\ast=\rho^\ast$,
but $d$ and $\rho$ are not locally similar.

\begin{example}
\label{example:comb}
Let $X$ be the following ``comb'' in $\bR^2$;
see Figure~\ref{fig:comb}:
\[
X \eqdef
([0, 1] \times \{0\})
\cup
\left(\bigcup_{q\in\bN}
\left\{\frac{1}{q}\right\}\times [0,1]\right).
\]
Notice that the ordinate axis $\{0\}\times\bR$ only intersects $X$ at the origin.

Let $d$ be the Euclidean distance on $X$,
inherited from $\bR^2$.
Define $\rho\colon X^2\to[0,+\infty)$,
\[
\rho(a,b)
\eqdef
\begin{cases}
|a_2-b_2|,\quad \text{if}\quad a_1=b_1;
\\
|a_1-b_1|+a_2+b_2,\quad \text{if}\quad a_1\neq b_1;
\end{cases}
\qquad (a=(a_1,a_2), b=(b_1,b_2)\in X).
\]
It is easy to see that $\rho$ is a distance on $X$.
The distances $d$ and $\rho$ are topologically equivalent; this can be proved by comparing the balls $B_d(a,\de_1)$ and $B_\rho(a,\de_2)$ for each $a$ in $X$, with small $\de_1$ and $\de_2$.

Furthermore, $d^\ast=\rho$.
In other words, we claim that
$d^\ast(a,b)=\rho(a,b)$
for every $a$, $b$ in $X$.
For brevity, we illustrate the idea of the proof for the particular case where
$a=(0,0)$ and $b=(1/q, y)$, $q\in\bN$, $0<y\le 1$.
If $\ga\colon[0,1]\to X$ is a continuous path joining $a$ with $b$,
then, by connectivity arguments,
there exists $u$ in $(0,1)$
such that $\ga(u)=(1/q, 0)$.
It is easy to see that
$L(d,\ga|_{[0,u]})\ge 1/q$
and $L(d,\ga|_{[u,1]})\ge y$;
here the domains of the paths are $[0,u]$ and $[u,1]$;
see Remark~\ref{rem:general_intervals_instead_of_unit_interval}.
Therefore, $L(d,\ga) \ge \rho(a,b)$ for every $\ga$ in $\Ga_d(a,b)$,
and it is easy to conclude that $d^\ast(a,b)=\rho(a,b)$.
In particular, $d\le\rho$.

Let us show that $\rho$ is not locally similar to $d$.
Indeed, taking $a=(0,0)$,
$b_q=(1/q,1/q)$,
and $c_q=(1/q,0)$
for $q$ in $\bN$, we get
\[
\frac{\rho(a,b_q)}{d(a,b_q)}
=
\frac{2/q}{\sqrt{2}/q}
=
\sqrt{2},
\qquad
\frac{\rho(a,c_q)}{d(a,c_q)}
=\frac{1/q}{1/q}
=1.
\]
Taking $q$ large enough,
we can get $b_q$ and $c_q$
arbitrarily close to $a$.
Thus, the quotient $\rho(a,x)/d(a,x)$ does not have any limit as $x$ tends to $a$.
More precisely, the superior and inferior limits defined in~\eqref{eq:Lf_and_lf}
are $L_{\id_X}(a)=\sqrt{2}$
and $\ell_{\id_X}(a)=1$.
This means that $\id_X$ has distortion $\sqrt{2}$ at the point $a$.
\hfill\qedsymbol
\end{example}

\begin{figure}[htp]
\centering
\begin{tikzpicture}
\filldraw [lightgreen]
  (-0.1, -0.1) rectangle (3, 0.1);
\filldraw [lightgreen]
  (3-0.1, -0.1) rectangle (3+0.1, 3+0.1);
\draw [thick] (0, 0) -- (6, 0);
\node at (0, -0.1) [below]
  {$\scriptstyle (0,0)$};
\draw [thick] (6, 0) -- (6, 6);
\node at (6, -0.1) [below]
  {$\scriptstyle (1, 0)$};
\draw [thick] (3, 0) -- (3, 6);
\node at (3, -0.1) [below]
  {$\scriptstyle \left(\frac{1}{2}, 0\right)$};
\node at (3.1, 3) [right]
  {$\scriptstyle \left(\frac{1}{2},\frac{1}{2}\right)$};
\draw [thick] (2, 0) -- (2, 6);
\node at (2, -0.1) [below]
  {$\scriptstyle \left(\frac{1}{3}, 0\right)$};
\draw [thick] (1.5, 0) -- (1.5, 6);
\draw [thick] (1.2, 0) -- (1.2, 6);
\draw [thick] (1, 0) -- (1, 6);
\node at (0.5, 3) {\Large\dots};
\end{tikzpicture}
\caption{Part of the comb from
Example~\ref{example:comb}.
The light-green thick lines illustrate the shortest path from $(0,0)$ to $\left(\frac{1}{2},\frac{1}{2}\right)$.
\label{fig:comb}}
\end{figure}

\medskip
\section{Examples related to kernels}
\label{sec:examples_RK}

Let $X$ be a set.
A function $K\colon X^2\to\bC$ is called a \emph{kernel} on $X$ if for every $m$ in $\bN$, every $x_1,\ldots,x_m$ in $X$ and every $\la_1,\ldots,\la_m$ in $\bC$,
\begin{equation}
\label{eq:kernel_definition}
\sum_{r,s=1}^m \overline{\la_r}\,\la_s\,K(x_r,x_s)\ge0.
\end{equation}
By the Moore--Aronszajn theorem
(see~\cite[Part~1, Section~4]{aronszajn_theory}
or \cite[Theorem~2.14]{Paulsen_Raghupathi_2016_introduction}),
every kernel corresponds to a uniquely determined reproducing kernel Hilbert space (RKHS). Conversely, every RKHS has a uniquely defined kernel.
So, speaking about kernels is equivalent to speaking about RKHS.

A function $K\colon X^2\to\bC$ is called a \emph{strictly positive kernel} on $X$ if for every $m$ in $\bN$, every list of pairwise different points $x_1,\ldots,x_m$ in $X$
and every
$\la_1,\ldots,\la_m$ in $\bC$,
not all zero,
\begin{equation}
\label{eq:strict_kernel_definition}
\sum_{r,s=1}^m \overline{\la_r}\,\la_s\,K(x_r,x_s)>0.
\end{equation}
See various characterizations of strictly positive kernels in~\cite[Section~3.2]{Paulsen_Raghupathi_2016_introduction}.

Agler and McCarthy~\cite[Lemma~9.9]{Agler_McCarthy_2002_Pick_interpolation}
proved that the following function $d_K$
is a distance on $X$.
See also
Arcozzi,
Rochberg,
Sawyer,
and Wick~\cite{Arcozzi2011}.

\begin{defn}[the distance associated with a kernel]
Let $X$ be a set and $K$ be a strictly positive kernel on $X$.
Define
$d_K\colon X^2\to[0,+\infty)$ by
\begin{equation}
d_K(x, y)
\eqdef
\sqrt{1-\frac{|K(x,y)|^2}{K(x,x)K(y,y)}}.
\end{equation}
\end{defn}

In this section, we apply the previous tools,
including Theorem~\ref{thm:main} and Corollary~\ref{cor:d1_locsim_d2_eq_d2intr},
to compute $d_K^\ast$ for some examples.

We start with two classical examples (Examples~\ref{example:Bergman_halfplane} and~\ref{example:Bergman_disk})
that are usually studied with the help of the \emph{Bergman metrics}
(for these two examples, the Bergman metrics are constant multiples of the Poincar\'{e} metrics),
see~\cite{Bergman1970,Zhu2005}.
We show that these examples can be treated with more elementary tools.

\begin{example}[Bergman space on the upper halfplane]
\label{example:Bergman_halfplane}
Consider the Bergman space of the analytic square-integrable functions on the upper halfplane with the Lebesgue measure.
The reproducing kernel of this space is
\[
K(w,z)
=\frac{1}{\pi(w-\overline{z})^2}.
\]
The associated distance is
\[
d_K(w,z)
=
\left|\frac{w-z}{w-\conj{z}} \right|\sqrt{2-\left|\frac{w-z}{w-\conj{z}} \right|^2}.
\]
So, $d_K$ can be written as the composition $f\circ \rho_\bH$,
where $\rho_\bH$ is as defined in~\eqref{eq:def_pseudohyperbolic_distance_on_halfplane}, and
\[
f(t)\eqdef t\sqrt{2-t^2}.
\]
It is easy to see that $f$ satisfies conditions of Corollary~\ref{cor:f_concave} with $Q = \sqrt{2}$. 
Therefore, $d_K\cong \sqrt{2}\,\rho_\bH$.
By Theorem~\ref{thm:main},
$d_K^\ast = \sqrt{2}\,\rho_\bH^\ast$.
Recall that $\rho_\bH^\ast$
is the hyperbolic distance on $\bH$,
see Example~\ref{example:pseudohyperbolic_halfplane}.
\hfill\qedsymbol
\end{example}

\begin{example}[Bergman space on the unit disk]
\label{example:Bergman_disk}
Consider the Bergman space of the analytic square-integrable functions on $\bD$ with the Lebesgue measure;
see, e.g., \cite{Bergman1970,Zhu2005}.
The reproducing kernel of this space is
\[
K(w,z) = \frac{1}{\pi(1-w\overline{z})^2}.
\]
The associated distance is
\[
d_K(z, w) 
= 
\left|\frac{z-w}{1-\overline{z}w} \right| \sqrt{2-\left| \frac{z-w}{1-\overline{z}w}\right|^2}.
\]
Note that $d_K = f\circ \rho_{\bD}$,
where $\rho_{\bD}$ is as in~\eqref{eq:def_pseudohyperbolic_distance_on_disk} and
\[
f(t) \eqdef t\sqrt{2-t^2}.
\]
Reasoning as in Example~\ref{example:Bergman_halfplane}, by Corollary~\ref{cor:f_concave},
$d_K \cong \sqrt{2}\,\rho_\bD$.
By Theorem~\ref{thm:main},
$d_K^\ast = \sqrt{2}\,\rho^\ast_\bD$.
\hfill\qedsymbol
\end{example}

\begin{example}[Hardy space and Szeg\H{o} kernel on the disk]
This example is studied in~\cite{Arcozzi2011}.
The reproducing kernel of the Hardy space, known as \emph{Szeg\H{o} kernel}, is
\[
K(w, z)
= \frac{1}{1-w\overline{z}}.
\]
The associated distance is the pseudohyperbolic distance: $d_K = \rho_{\bD}$. Therefore, $d_K^\ast = \rho_{\bD}^\ast$.
\hfill\qedsymbol
\end{example}

\begin{example}[the Gaussian kernel]
Let $\sigma>0$.
We consider the following kernel on $\bR^n$:
\[
K(x,y)
=\exp(-\sigma^2 \|x-y\|^2).
\]
The corresponding RKHS was described
by Saitoh~\cite{Saitoh1983}
and by Steinwart, Hush, and Scovel~\cite{SteinwartHushScovel2006}.
In this example,
\[
d_K(x,y)
=\sqrt{1-\exp(-2\sigma^2\|x-y\|^2)}.
\]
So, $d_K$ can be written as the composition
$f\circ d_{\bR^n}$,
where
\begin{equation}
\label{eq:sqrt(1-exp(-t2))}
f(t)\eqdef
\sqrt{1-\enumber^{-2\sigma^2 t^2}}.
\end{equation}
It is easy to see that $f$ satisfies the conditions from
Corollary~\ref{cor:f_concave}
with $Q = \sqrt{2}\sigma$.
Therefore,
$d_K\cong \sqrt{2}\sigma d_{\bR^n}$.
By Theorem~\ref{thm:main}
and Example~\ref{example:Euclidean_distance},
$d_K^\ast
=\sqrt{2}\sigma d_{\bR^n}^\ast
=\sqrt{2}\sigma d_{\bR^n}$.
\hfill\qedsymbol
\end{example}

\begin{example}[Bargmann--Segal--Fock space on the complex plane]
\label{example:Fock}
For the definition and properties of this space, see~\cite{Zhu2012}.
Its reproducing kernel is
\[
K(w,z)
= \enumber^{w\conj{z}}.
\]
In this example, $d_K\colon\bC^2\to[0,+\infty)$ is defined by
\[
d_K(w,z)
= \sqrt{1-\enumber^{-|z-w|^2}}\qquad(w,z\in\bC).
\]
We notice that $d_K = f\circ d_\bC$, where $d_\bC$ is the Euclidean distance on $\bC$ and
\[
f(t)\eqdef\sqrt{1-\enumber^{-t^2}}.
\]
By Corollary~\ref{cor:f_concave},
$d_K\cong d_\bC$.
By Theorem~\ref{thm:main} and Example~\ref{example:Euclidean_distance},
$d_K^\ast = d_\bC^\ast = d_\bC$.
\hfill\qedsymbol
\end{example}

\begin{example}[polyanalytic Bargmann--Segal--Fock space on the complex plane]
\label{example:poly_Fock}
This is a generalization of Example~\ref{example:Fock}.
Let $m\ge 1$.
Consider the space of the $m$-analytic functions on $\bC$, square integrable with respect to the Gaussian measure.
The corresponding reproducing kernel was computed by Askour, Intissar, and Mouayn~\cite{AskourIntissarMouayn1997}:
\begin{equation}
\label{eq:kernel_poly_Fock}
K(w,z)
= \enumber^{w\,\overline{z}}
L^1_{m-1}(|w-z|^2).
\end{equation}
See also Youssfi~\cite{Youssfi2021} for another proof of~\eqref{eq:kernel_poly_Fock},
in the multidimensional case.
Here $L^1_{m-1}$ denotes the associated Laguerre(-Sonine) polynomial of degree $m-1$,
with parameter $1$:
\begin{equation}
\label{eq:associated_Laguerre}
L^1_{k}(x)
= 
\sum_{j=0}^{k}
(-1)^j \binom{k+1}{k-j}
\frac{x^j}{j!}.
\end{equation}
The associated distance is
\begin{equation}
\label{eq:distance_poly_Fock}
d_K(w,z)
=
\sqrt{1-\frac{1}{m^2}
\enumber^{-|w-z|^2}
L^1_{m-1}(|w-z|^2)^2}
\qquad(w,z\in\bC).
\end{equation}
We see that
$d_K=f\circ d_{\bC}$, where
\[
f(t) \eqdef
\sqrt{1-\frac{1}{m^2}\enumber^{-t^2}L^1_{m-1}(t^2)^2}.
\]
It follows from~\eqref{eq:associated_Laguerre}
that
$L_{m-1}^1(t^2)^2=m^2-m^2(m-1)t^2+O(t^4)$.
Therefore, an elementary analysis shows that $f$ satisfies conditions of Proposition~\ref{prop:d_2 = f circ d_1}, with $Q=\sqrt{m}$.
Therefore,
$d_K\cong \sqrt{m}\,d_{\bC}$.
By Theorem~\ref{thm:main} and Example~\ref{example:Euclidean_distance},
$d_K^\ast
= \sqrt{m}\,d_\bC^\ast
= \sqrt{m}\,d_\bC$.
\hfill\qedsymbol
\end{example}

\begin{example}[the space of band-limited functions]
Given $f$ in $L^2(\bR)$, we denote by $\widehat{f}$ the Fourier--Plancherel transform of $f$,
with the factor $\exp(-2\pi\iu xt)$ under the integral.
Let $A>0$.
The space of band-limited functions with halfwidth $A$,
also known as the Paley--Wiener space of halfwidth $A$,
is defined by
\[
\mathcal{PW}_A
\eqdef
\bigl\{\widehat{g}\colon \ g\in L^2(\bR),\ 
\mu(\{x\in\bR\setminus[-A,A]\colon\ g(x)\ne0\})=0\bigr\}.
\]
This space is considered with the inner product
$\langle \widehat{g_1}, \widehat{g_2}\rangle_{\mathcal{PW}_A}
\eqdef
\langle g_1, g_2\rangle_{L^2(\bR)}$
which coincides with
$\langle \widehat{g_1}, \widehat{g_2}\rangle_{L^2(\bR)}$ by the Plancherel theorem.
We denote by $\sinc$ the normalized cardinal sine function defined on $\bR$ by
\[
\sinc(x)\eqdef
\begin{cases}
1,
&
x=0;
\\
\frac{\sin(\pi x)}{\pi x},
&
x\in\bR\setminus\{0\}.
\end{cases}
\]
Using the formula
\begin{equation}
(\widehat{1_{[-A,A]}})(t)
= 2A\sinc(2At)
\qquad(t\in\bR)
\end{equation}
it is easy to see (e.g., \cite[Section 4.3, Example 4.2]{manton_primer},
\cite[Subsection~1.3.2]{Paulsen_Raghupathi_2016_introduction})
that $\mathcal{PW}_A$ is a RKHS on $\bR$, and its reproducing kernel is
\begin{equation}
\label{PW_RK}
K(s,t)
= 2A\sinc(2A(s-t)).
\end{equation}
The associated distance is
\begin{equation}
\label{eq:dK_PW}
d_K(s, t)
= \sqrt{1-\sinc^2(2A(t-s))}.
\end{equation}
Notice that
$d_K=f\circ\left(\frac{2A\pi}{\sqrt{3}}\,d_\bR\right)$, where
\[
f(t) \eqdef
\sqrt{1-\sinc^2\frac{\sqrt{3}\,t}{\pi}}.
\]
A simple calculation with power series yields
$f'(0)=1$.
Using the identity
$\sin^2(x) = \frac{1-\cos(2x)}{2}$, we get
\[
\sinc^2(x)
= \frac{1-\cos(2\pi x)}{2(\pi x)^2}
\leq 
1-\frac{(\pi x)^2}{3}+ \frac{2(\pi x)^4}{45}.
\]
If $t\in [0, 1]$, then
\[
f(t)
=\sqrt{1-\sinc^2\frac{\sqrt{3}\,t}{\pi}}
\ge
\sqrt{t^2 - \frac{2t^4}{5}}
=
\sqrt{t^2\left(1-\frac{2t^2}{5}\right)}
\ge 
\frac{t}{2}.
\]
Recall that
$|\sinc(x)|\le\frac{1}{\pi x}$.
If $t>1$, then
\[
f(t)
=
\sqrt{1-\sinc^2\frac{\sqrt{3}\,t}{\pi}}
\ge
\sqrt{1-\frac{1}{3t^2}}
>\frac{1}{2}.
\]
Joining the two cases, we get
\[
f(t)\ge\frac{1}{2}\min\{t,1\}
\qquad(t>0).
\]
This means that $f$ satisfies the hypothesis of Proposition~\ref{prop:d_2 = f circ d_1} with $Q=1$.
So, 
$d_K\cong\frac{2A\pi}{\sqrt{3}}d_\bR$,
and
$d_K^\ast = \frac{2A\pi}{\sqrt{3}}d_\bR^\ast
= \frac{2A\pi}{\sqrt{3}}d_\bR$.
\hfill\qedsymbol
\end{example}

\begin{example}[a Sobolev space]
\label{example:Sobolev}
We denote by $\AC([0, 1])$ the set of all absolutely continuous complex-valued functions on $[0, 1]$.
Consider the vector space
\[
\cH \eqdef
\bigl\{f\in \AC([0, 1]) \colon \ f(0) = 0,\ f(1)=0,\ f'\in L^2([0,1])\bigr\},
\]
with the inner product
$\langle f, g\rangle_{\cH}
\eqdef \langle f',g'\rangle_{L^2([0,1])}$.
It is known~\cite[Subsection 1.3.1]{Paulsen_Raghupathi_2016_introduction} 
that $\cH$ is an RKHS, and its reproducing kernel is
\begin{equation}
K(x,y)
=
\begin{cases}
(1-x)y, & y\le x; \\
(1-y)x, & x<y.
\end{cases}
\end{equation}
The distance associated to $K$ is
\begin{equation}\label{eqn:d_K_Sobolev_space}
d_K(x, y)
= 
\begin{cases}
0, & x=y;
\\
\sqrt{\frac{y-x}{(1-y)x}},
& y>x;
\\
\sqrt{\frac{x-y}{(1-x)y}},
& x>y.
\end{cases}
\end{equation}
Since $x\le 1$ and $y\le 1$,
\[
d_K(x,y)\ge \sqrt{|x-y|}.
\]
Reasoning similarly to Example~\ref{example:sqrt},
we see that
\begin{equation}
\label{eq:d_K_intrinsic_Sobolev}
d_K^\ast(x, y)
= 
\begin{cases}
0, & x=y;
\\
+\infty, & x\neq y.
\end{cases}
\end{equation}
\hfill\qedsymbol
\end{example}

\begin{example}[the min-kernel on the unit interval]
Consider the vector space
\[
\cH \eqdef
\bigl\{f\in \AC([0, 1]) \colon \ f(0) = 0, \ f'\in L^2([0,1])\bigr\}
\]
with the inner product
$\langle f, g\rangle_{\cH}
\eqdef \langle f',g'\rangle_{L^2([0,1])}$.
It is known~\cite[Section 4.3, Example 4.3]{manton_primer} 
that $\cH$ is an RKHS, and its reproducing kernel is
\begin{equation}
K(x,y)
=
\min\{x, y\}.
\end{equation}
This kernel corresponds to the covariance for the Wiener process.
The induced distance is
\begin{equation}
\label{eqn:d_K_min}
d_K(x, y)
= 
\begin{cases}
0, & x=y;
\\
\sqrt{\frac{y-x}{y}},
& y>x;
\\
\sqrt{\frac{x-y}{x}},
& x>y.
\end{cases}
\end{equation}
Since $x\le 1$ and $y\le 1$,
\[
d_K(x,y)\ge \sqrt{|x-y|}.
\]
Similarly to Example~\ref{example:Sobolev},
$d_K^\ast$ is given by~\eqref{eq:d_K_intrinsic_Sobolev}.
\hfill\qedsymbol
\end{example}

\medskip
\section*{Acknowledgements}

We are grateful to Prof. John Harvey (\myurl{https://orcid.org/0000-0001-9211-0060})
for the bibliographic reference~\cite{Lytchak2005} which we use in Section~\ref{sec:strongly_similar_distances}, Definition~\ref{def:infin_similar}.
We are also grateful to Prof. David Jos\'{e} Fern\'{a}ndez-Bret\'{o}n
(\myurl{https://orcid.org/0000-0001-8084-055X})
for explaining to us the ideas from Remark~\ref{rem:axiom_of_choice}.
We are grateful to Prof. Miguel Angel Rodriguez Rodriguez
(\myurl{https://orcid.org/0000-0002-5124-8271})
for revising a draft of this paper and correcting some minor errors.
We are grateful to Omer Cantor
(\myurl{https://orcid.org/0009-0007-2969-7170})
for Example~\ref{example:OmerCantor}.

\medskip
\section*{Funding}

The second author has been supported by Proyecto SECIHTI (Mexico) ``Ciencia de Frontera''
FORDECYT-PRONACES/61517/2020
and by IPN-SIP projects (Instituto Polit\'{e}cnico Nacional, Mexico).
The third author has been supported by SECIHTI (Mexico) scholarship.

\bigskip
\subsection*{Information about the authors}

Erick Lee-Guzm\'{a}n,
\myurl{https://orcid.org/0009-0001-1731-8803},
\\
email: ericklee81@gmail.com

\medskip\noindent
Egor A. Maximenko,
\myurl{https://orcid.org/0000-0002-1497-4338},
\\
email: egormaximenko@gmail.com, emaximenko@ipn.mx

\medskip\noindent
Enrique Abdeel Mu\~{n}oz-de-la-Colina,
\myurl{https://orcid.org/0009-0002-2397-6607},
\\
email: abdeeldlc@gmail.com

\medskip\noindent
Marco Iv\'{a}n Ruiz-Carmona,
\myurl{https://orcid.org/0009-0008-8639-6291},
\\
email: marco.ivan.ruiz.0821@gmail.com

\medskip\noindent
\textbf{Affiliation:}
Instituto Polit\'{e}cnico Nacional,
Escuela Superior de F\'{i}sica y Matem\'{a}ticas,
\\
Ciudad de M\'{e}xico,
Postal Code 07738,
Mexico.


\begin{thebibliography}{20}

\bibitem{Agler_McCarthy_2002_Pick_interpolation}
Agler, J.,
McCarthy J.E.: 
Pick Interpolation and Hilbert Function Spaces. 
Amer. Math. Soc.,
Providence, R.I.
(2002),
\doi{10.1090/gsm/044}.

\bibitem{anderson_hyperbolic_geometry}
Anderson, J.:
Hyperbolic Geometry.
Springer, London (2005),
\\
\doi{10.1007/1-84628-220-9}.

\bibitem{Arcozzi2011}
Arcozzi, N.,
Rochberg, R.,
Sawyer, E.,
Wick, B.D.:
Distance functions for reproductive kernel Hilbert spaces.
Contemporary Mathematics, 547(1):25--52 (2011),
\doi{10.1090/conm/547/10805}.

\bibitem{aronszajn_theory}
Aronszajn, N.: 
Theory of reproducing kernels.
Trans. Amer. Math. Soc. \textbf{68}, 337--404 (1950),
\doi{10.1090/S0002-9947-1950-0051437-7}.

\bibitem{AskourIntissarMouayn1997}
Askour, N.,
Intissar, A.,
Mouayn, Z.:
Explicit formulas for reproducing kernels of generalized Bargmann spaces.
C.R. Acad. Sci. Paris, Ser. I
325, 707--712 (1997),
\doi{10.1016/S0764-4442(97)80045-6}.

\bibitem{Bergman1970}
Bergman, S.:
The Kernel Function and Conformal Mapping. 2nd. ed.
Amer. Math. Soc,
Providence, R.I.
(1970).

\bibitem{Bourbaki1995top}
Bourbaki, N.:
General Topology, Chapters~1--4.
Springer, Berlin, Heidelberg
(1995),
\doi{10.1007/978-3-642-61701-0}.

\bibitem{burago_metric}
Burago, D.,
Burago, Y.,
Ivanov, S.:
A Course in Metric Geometry.
Amer. Math. Soc.,
Providence, R.I.
(2001),
\doi{10.1090/gsm/033}.

\bibitem{Capogna_LeDonne_2021}
Capogna, L.,
Le Donne, E.:
Conformal equivalence of visual metrics in pseudoconvex domains. 
Math. Ann. 379, 743--763 (2021),
\doi{10.1007/s00208-020-01962-1}.

\bibitem{deza_encyclopedia}
Deza, E.,
Deza, M.M.:
Encyclopedia of Distances.
Springer, Berlin (2009),
\doi{10.1007/978-3-642-30958-8}.

\bibitem{gromov_riemannian}
Gromov, M.: 
Metric Structures for Riemannian and Non--Riemannian Spaces.
Birkh\"{a}user, Boston
(1999),
\doi{10.1007/978-0-8176-4583-0}.

\bibitem{kunzinger_alexandrov}
Kunzinger, M.,
Steinbauer, R.:
Alexandrov spaces, lecture notes. University of Vienna. \url{https://www.mat.univie.ac.at/~mike/teaching/ss18/Alexandrov_spaces.pdf} (2021).
Accessed in January 2026.

\bibitem{Lytchak2005}
Lytchak, A. (2005):
Differentiation in metric spaces.
St. Petersburg Math. J.
\textbf{16}:6,
1017--1041,
\doi{10.1090/S1061-0022-05-00888-5}
(translation from
Algebra i Analiz, 2004, 
16:6, 128--161).

\bibitem{manton_primer}
Manton, J.H.,
Amblard, P.O.:
A primer on reproducing kernel Hilbert spaces.
Foundations and Trends in Signal Processing
\textbf{8}, 1--126 (2015),
\doi{10.1561/2000000050}.

\bibitem{Paulsen_Raghupathi_2016_introduction}
Paulsen, V.I.,
Raghupathi, M.:
An Introduction to the Theory of Reproducing Kernel Hilbert Spaces.
Cambridge University Press (2016),
\doi{10.1017/CBO9781316219232}.

\bibitem{Royden_1988}
Royden, H.L.:
Real Analysis, 3th ed.
Prentice--Hall,
Macmillan, New York
(1988).

\bibitem{Saitoh1983}
Saitoh, S. (1983):
The Weierstrass transform
and an isometry in the heat equation.
Applicable Analysis:
An International Journal, 16:1, 1--6,
\doi{10.1080/00036818308839454}.

\bibitem{SteinwartHushScovel2006}
Steinwart, I.,
Hush, D.,
Scovel, C.:
An explicit description of the reproducing kernel Hilbert spaces of Gaussian RBF kernels.
IEEE Transactions on Information Theory
\textbf{52}:10, 4635--4643 (2006),
\doi{10.1109/TIT.2006.881713}.

\bibitem{Voight2021}
Voight, J.:
Quaternion
Algebras.
Springer, Cham (2021),
\\
\doi{10.1007/978-3-030-56694-4}.

\bibitem{Youssfi2021}
Youssfi, E.H.:
Polyanalytic reproducing kernels in $\bC^n$.
Complex Anal. Synerg. 7, 28 (2021),
\doi{10.1007/s40627-021-00088-7}.

\bibitem{Zhu2012}
Zhu, K.:
Analysis on Fock Spaces.
Springer, New York (2012),
\doi{10.1007/978-1-4419-8801-0}.

\bibitem{Zhu2005}
Zhu, K.:
Spaces of Holomorphic
Functions in the Unit Ball.
Springer, New York (2005),
\doi{10.1007/0-387-27539-8}.

\end{thebibliography}
\end{document}